\documentclass[12pt]{amsart}
\usepackage[margin=1in]{geometry}
\newtheorem{theorem}{Theorem}[section]
\newtheorem{lemma}{Lemma}[section]

\newtheorem{prop}{Proposition}[section]
\theoremstyle{remark}
\newtheorem{remark}{Remark}[section]
\theoremstyle{definition}
\newtheorem{definition}{Definition}[section]

\begin{document}

\title[Random Stability of Random Variables]{Random Stability of Random Variables}

\author{Andrey Sarantsev}
\address{University of Nevada, Reno, Department of Mathematics and Statistics}
\email{asarantsev@unr.edu}

\subjclass[2020]{Primary 60E07, Secondary 60E10, 60J80}
\keywords{Branching processes, stable distributions, strict stability, characteristic function, Linnik distribution, Mittag-Leffler distribution, Poincare functional equation.}

\begin{abstract}
For a random variable $N = 0, 1, 2, \ldots$ we study the following question: When does   the sum of $N$ many independent and identically distributed copies of a random variable $X$ have the same law a a nontrivial rescaling of $X$? We show that such $N$-stable random variable exists if and only $1 < \mathbb E[N] < \infty$. Under an additional assumption $\mathbb E[N\ln N] < \infty$, we describe all $N$-stable $X$. We also study a converse problem: For a given $X \ge 0$ with $\mathbb E[X] = 1$, we study the set of all $N$ such that $X$ is $N$-stable. Distributions of $N$ form a commuting semigroup with respect to composition of probability generating functions. 
\end{abstract}

\maketitle

\thispagestyle{empty}

\section{Introduction}

\subsection{Main concepts} 
A real-valued random variable $X$ (or its distribution on the real line) is called {\it stable} if for every $n = 2, 3, \ldots$ there exist constants $a_n > 0$ and $b_n$ such that for independent identically distributed copies $X_1, X_2, \ldots, X_n$ of $X$, we have: 
$$
X_1 + \ldots + X_n = a_nX + b_n.
$$
If $b_n = 0$, this random variable or its distribution is called {\it strictly stable}. The theory of stable and strictly stable distributions is classic and well-developed, see the monographs \cite{STbook, SteutelBook}; and a less known but useful book \cite[Chapter 2]{KlebanovBook}; see the discussion in Section 2 below. These distributions are described via their characteristic functions (Fourier transforms) $\mathbb E[e^{\mathrm{i}uX}]$, where $\mathrm{i} = \sqrt{-1}$. 

Applications of strictly stable distributions include: (a) Actuarial mathematics, with $n$ the number of claims and $X_k$ the size of the $k$th claim; then the overall size of all claims has the same distribution as each claim, up to scaling; (b) Quantitative finance, with $X_k$ the stock market log returns on day $k$, and $n$ the number of days; then the cumulative log return for $n$ days is given by the sum $X_1 + \ldots + X_n$; (c) Stable distributions can serve as scaling limits of sums of IID random variables, generalizing the Central Limit Theorem; (d) Increments of stable L\'evy processes, which have stationary independent increment and are also invariant with respect to scaling. 

The topic of this article is {\it random stability} or {\it $N$-stability.} Definition~\ref{def:main-RV} is a generalization of strictly stable distributions $Q$, which are $N$-stable in terms of this Definition~\ref{def:main-RV} for a constant $N = n$ for each $n = 2, 3, \ldots$  

\begin{definition}
Take a random variable $N$ with values $0, 1, 2, \ldots$ with distribution $P$. A probability measure $Q$ on $\mathbb R$, or, equivalently, a random variable $X \sim Q$, is called {\it $N$-stable} or, equivalently, {\it $P$-stable}, if there exists a $c > 0$ such that the following equality in law holds: 
\begin{equation}
\label{eq:main-RV}
X_1 + \ldots + X_N \stackrel{d}{=} cX,
\end{equation}
where $X_1, X_2, \ldots \sim Q$ are independent copies of $X$, independent of $N$. 
\label{def:main-RV}
\end{definition}

In \cite[Definition 3]{Klebanov}, the term {\it strictly $N$-stable} is used. Alternatively, the article \cite{B} uses the term {\it branching stability} to stress connection with branching processes. Recall that a classic (Galton-Watson) discrete-time branching process $(X_0 = 1, X_1, X_2, \ldots)$ is constructed as follows: We fix a distribution $P$ on $\{0, 1, 2, \ldots\}$. At each step $n$, we treat the value $X_n$ of the process as the population, and generate a random number of offsprings (distributed as $Q$) for each member of this population. For all members, the numbers of their offsprings are independent. Classic monographs on branching processes include \cite{Harris} and a more recent book \cite{AthreyaBook}. We will see in our article that this connection is indeed critical. Indeed, the classic result (see Section 2 below) is: Under simple conditions, if $c$ is the mean of $Q$, then $X_n/c^n$ weakly converges to a $Q$-stable distribution as $n \to \infty$.  

Importantly, we can rewrite the main equation from Definition~\ref{def:main-RV}. Taking the characteristic function (Fourier transform) $f(u) = \mathbb E[e^{iuX}]$ of $X$, and the probability generating function $\varphi$ of $N$: $\varphi(s) = \mathbb E[s^N]$, we rewrite this as the {\it Poincare functional equation}:
\begin{equation}
\label{eq:main-ChF}
\varphi(f(u)) = f(cu),\quad u \in \mathbb R.
\end{equation}
If $X \ge 0$, then the same equation holds for the Laplace instead of the Fourier transform. Taking the inverse of this Laplace transform, we can rewrite this Poincare equation, in turn, as another functional equation: {\it Schr\"oder equation}, see \cite{Kuczma}. 

\subsection{Literature review on random stability} For random stability, there has been a lot of research starting from the 1990s, as well as some earlier research in relation to the classic theory of branching processes. See \cite{Koz1} for a survey of geometric stable distributions ($N$-stable for geometric $N$), and a more detailed description in \cite[Chapter 9]{KKRbook}. A classic application for classic stable and random stable laws is asset returns, see \cite[Remark 2.2]{Koz1} and references therein; also \cite[Chapter 4]{KlebanovBook}, page 157. For $N$-stable random variables with general $N$, see \cite[Chapter 4]{KlebanovBook}, and an important short note \cite{Anna}. Also, see  articles \cite{Stable, Zolotarev}.

\subsection{Our contributions} However, existing literature is somewhat fragmented. Equivalent formulations have appeared in different forms, and this article aims to unify and extend them. Theorems~\ref{thm:main}, ~\ref{thm:repr}, and~\ref{thm:commute} are our main results. We could not find Theorem~\ref{thm:main} in other articles. The closest was \cite[Corollary 1.2]{Liu1998} with (in their notation) $A_i = 1/c$. This discusses a generalization of random stability to so-called {it smoothing problems}. But their proof applies only to nonnegative $N$-stable $X$. Theorem~\ref{thm:repr} was shown in \cite{Anna} but under additional complicated conditions.  Theorem~\ref{thm:commute} was shown in special cases, see \cite{GnedenkoBook}. 

\subsection{Organization of the article}
Section~\ref{sec:main} contains main results: Theorems~\ref{thm:main},~\ref{thm:repr}, and~\ref{thm:commute}, as well as most lemmas. In Section~\ref{sec:semigroup}, we fix a distribution $Q$ on the real line and study $G(Q)$, which is the set of distributions $P$ on $0, 1, 2, \ldots$ for which $Q$ is $P$-stable. We also discuss two equivalent definitions of random stability.  The last section is devoted to proofs. 

\section{Main Results}
\label{sec:main}

\subsection{Existence results} Theorem~\ref{thm:main} is the main result of this article. As mentioned earlier, we could not find it in the literature, despite its importance. 

\begin{theorem} Assume $N$ is not identically zero or one. There exists an $N$-stable random variable $X$ which is not identically zero if and only if $1 < \mathbb E[N] < \infty$.  
\label{thm:main}
\end{theorem}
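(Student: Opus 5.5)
Both directions go through the Poincar\'e equation \eqref{eq:main-ChF}, $\varphi(f(u)) = f(cu)$, where $m := \mathbb E[N] = \varphi'(1)$.

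\emph{Sufficiency.} Assume $1 < m < \infty$. I would take $X = W$, the limit of the Galton--Watson martingale $Z_n/m^n$, where $Z_n$ is the branching process with offspring law $P$ and $Z_0 = 1$. Conditioning on the first generation gives $Z_{n+1}/m^{n} \stackrel{d}{=} \sum_{k=1}^{Z_1} Z^{(k)}_n/m^n$. Letting $n\to\infty$ yields $mW \stackrel{d}{=} W_1+\dots+W_N$, so $W$ is $N$-stable with $c = m$. The difficulty is that $W$ may vanish identically when $\mathbb E[N\ln N]=\infty$ (Kesten--Stigum). To avoid this I would use the Seneta--Heyde normalization: there are constants $C_n$ with $C_{n+1}/C_n \to m$ such that $Z_n/C_n \to W$ almost surely, with $W$ nondegenerate and $W>0$ on survival. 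Survival has positive probability because $m>1$. The same conditioning argument then gives $W_1+\dots+W_N \stackrel{d}{=} mW$.

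An alternative, analytic route builds the Laplace transform directly. Let $q<1$ be the extinction probability. Near the fixed point $s=1$ the equation $\varphi(\psi(t)) = \psi(mt)$ has a Koenigs-type solution $\psi(t) = \lim_n \varphi_n(1 - t/C_n)$, obtained by inverting iterates. One then checks that $\psi$ is completely monotone, as a limit of Laplace transforms, and that $\psi(0+) = 1$ and $\psi(\infty) = q$. I would cite Seneta--Heyde for this step rather than reprove it.

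\emph{Necessity.} Let $X \not\equiv 0$ be $N$-stable with constant $c$.

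First, the case $c \le 1$ is impossible. Iterating \eqref{eq:main-RV} gives $cX$ and $c^nX$ as random sums over $Z_n$ copies of $X$. Taking absolute values does not directly give a contradiction, so I would use characteristic functions instead. Let $f$ be the characteristic function of $X$ and $u_0$ a point near $0$ with $|f(u_0)| < 1$. Since $|\varphi(z)| \le \varphi(|z|)$, iterating gives $|f(c^n u)| \le \varphi_n(|f(u)|)$. If $m \le 1$ and $N \not\equiv 1$, then $\varphi_n(s) \to 1$ does not happen: $\varphi_n(s)\to q = 1$ only for critical or subcritical processes. More precisely, $1-\varphi_n(s)\to 0$ when $m\le 1$, which points the wrong way, so this argument needs care.

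The cleaner route is to compare growth near $u=0$. Write $g(u) = 1-\mathrm{Re}\, f(u) \ge 0$, which is not identically zero. Because $N\not\equiv 0$, the case $c=1$ together with $f(u)=\varphi(f(u))$ forces $|f|\in\{$fixed points$\}$, giving a contradiction. I would therefore handle $c$ by the scale: $f(c^n u) = \varphi_n(f(u))$.

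\emph{Case $m \le 1$.} Here $\varphi_n(s)\to 1$ uniformly on $|s|\le 1$ as $n\to\infty$, because $Z_n \to 0$ almost surely (recall $N \not\equiv 1$). Hence $f(c^n u) \to 1$ for every $u$. If $c>1$, this forces $f \equiv 1$ by continuity on rays. If $c<1$, then $f(c^n u)\to f(0)=1$ is automatic, so instead I use $f(u) = \varphi_n^{-1}$-type arguments. Specifically, $f(u) = \varphi_n(f(c^{-n}u))$ with $c^{-n}\to\infty$, and $|\varphi_n(s)| \to 1$ uniformly, giving $|f(u)|=1$ for all $u$. So $X$ is degenerate at some point $a$, and then \eqref{eq:main-RV} gives $Na = ca$ almost surely, hence $a=0$, a contradiction. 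The case $c=1$ is handled the same way with either iteration.

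\emph{Case $m = \infty$.} Let $h(u) = 1 - |f(u)|^2$, which is the characteristic function of the symmetrized variable, so I may assume $X$ is symmetric and $f$ is real with $f(u)\ge 1-Ku^2$ fails in general. Instead I use the estimate $1-\varphi(s) \ge (1-s)\,\mathbb E[N\wedge (1-s)^{-1}]/e$, whose ratio to $1-s$ tends to infinity as $s\uparrow 1$. Applying this at $s=f(u)$ with $u\to0$ gives $1-f(cu) \gg 1-f(u)$. Iterating along $u_k = c^{-k}u_0$ produces $1-f(u_0) \ge M^k (1-f(u_k))$ for arbitrarily large $M$. This is incompatible with the general fact that for a symmetric random variable $1-f(u)$ cannot decay faster than $u^2$, i.e.\ $\liminf (1-f(u))/u^2 >0$ unless $X\equiv 0$. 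For this I choose $M > c^2$.

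I expect this $m=\infty$ estimate to be the main obstacle, together with the sign bookkeeping for whether $c$ is greater or less than $1$. The symmetrization step needs $|f|^2$ to satisfy the same equation, which fails. So I would instead apply the inequality $1-\mathrm{Re}\,\varphi(z)\ge$ (comparable bound) directly to real parts.
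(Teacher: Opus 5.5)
Your sufficiency argument is the paper's Step 8. Your case $\mathbb E[N]\le 1$ is sound in substance: $|\varphi_n(s)-1|\le 2\,\mathbb P(Z_n\neq 0)$ on $|s|\le 1$, together with $f(v)=\varphi_n(f(c^{-n}v))$, gives $f\equiv 1$ for every $c>0$ at once. It is this uniformity, not ``continuity on rays'', that does the work.

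The genuine gap is the case $\mathbb E[N]=\infty$, which is the heart of the theorem. The inequality you actually prove, $1-\varphi(s)\ge(1-e^{-1})(1-s)\,\mathbb E[\min(N,(1-s)^{-1})]$, holds for real $s\in[0,1)$. So it can be applied at $s=f(u)$ only when $f$ is real. You cannot reduce to symmetric $X$: as you observe yourself, $\varphi(|f(u)|^2)\neq|f(cu)|^2$ in general, so the symmetrization of $X$ is not $N$-stable. Your replacement, ``a comparable bound for $1-\mathrm{Re}\,\varphi(z)$'', is never stated or proved, and it is not a formality. For lattice $N$ one can have $\varphi(e^{\mathrm{i}\theta})=1$ with $\theta\neq 0$, so such a bound can only hold locally near $z=1$, and its angular part must also be forced by $\mathbb E[N]=\infty$. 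Moreover, iterating along $u_k=c^{-k}u_0$ requires $c>1$, and you abandoned your argument excluding $c\le 1$, so that subcase is open as written.

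The gap can be closed, and then your route is genuinely different from the paper's. Take $z=re^{\mathrm{i}\theta}$ with $r\ge 1/2$, and $1\le n\le K(z):=\min\{(1-r)^{-1},\pi/|\theta|\}$ (with $1/0:=\infty$). Then $1-r^n\ge(1-e^{-1})n(1-r)$, $r^n\ge e^{-2}$, and $1-\cos n\theta\ge 2n^2\theta^2/\pi^2$. Since $1-\mathrm{Re}(z^n)\ge 0$ for every $n$, this yields
\[
1-\mathrm{Re}\,\varphi(z)\ \ge\ c_0\,\mathbb E\bigl[N\mathbf 1_{\{N\le K(z)\}}\bigr]\,(1-\mathrm{Re}\,z)
\]
with an absolute constant $c_0>0$, and $K(z)\to\infty$ as $z\to 1$. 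Hence $1-\mathrm{Re}\,f(cu)\ge M\,(1-\mathrm{Re}\,f(u))$ for $|u|\le\delta$, with $M$ as large as you like.

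If $c\le 1$, iterate forward to get $\mathrm{Re}\,f\equiv 1$ near $0$, so $X=0$. If $c>1$, take $M>c^2$ and compare $1-\mathrm{Re}\,f(c^{-k}u_0)\le 2M^{-k}$ with Fatou's bound $\liminf_{u\to 0}(1-\mathrm{Re}\,f(u))/u^2\ge\mathbb E[X^2]/2>0$, which needs no symmetry.

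The paper keeps the argument of $\varphi_k$ real in a different way. It symmetrizes the random sums with a \emph{shared} $N_k$: $\Sigma_k=S_k-S'_k$ is tight, with characteristic function $\varphi_k(|f(u/c^k)|^2)$. It then uses Seneta's theorem that $N_k/a^k\to\infty$ with positive probability, plus a second-difference argument. The completed version of your approach is more elementary: it needs neither tightness nor the infinite-mean growth theorem. It also handles $c\le 1$ explicitly, which the paper's Step 7 (choosing $a=c^2>1$ and $t_k\to 0$) tacitly excludes.
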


\begin{remark} 
Consider a {\it Sibuya distribution} with probability generating function $G_p(s) = 1 - (1 - s)^p$ for $p \in (0, 1)$. It was introduced in \cite[formula (2)]{S} with (in their notation) $\beta = \gamma = 1$. See also a more recent article \cite{Sibuya} with an extension with mass at zero. This distribution has infinite mean. By Theorem~\ref{thm:main}, there exists no Sibuya-stable random variable. 
\label{rmk:Sibuya}
\end{remark}

The following is a well-known uniqueness result, which can be found in the classic monograph on branching processes: \cite[Chapter I]{AthreyaBook}, Section 10, Theorem 2. Let us impose an additional assumption 
\begin{equation}
\label{eq:logN}
\mathbb E[N\ln N] < \infty
\end{equation}
with the standard convention that $0 \ln 0 := 0$. Of course, ~\eqref{eq:logN} implies $\mathbb E[N] < \infty$. 

\begin{prop} 
Assume $\mathbb E[N] > 1$. Under the assumption~\eqref{eq:logN}, there exists one and only one $N$-stable $X \ge 0$ (up to equality in law) such that $\mathbb E[X] = 1$. For this $X$, we have $c = \mathbb E[N]$ in~\eqref{eq:main-RV}. 
\label{prop:n-ln-n}
\end{prop}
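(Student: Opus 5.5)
Write $m = \mathbb E[N] \in (1,\infty)$ and $\varphi(s) = \mathbb E[s^N]$. For nonnegative $X$ we use the Laplace transform $f(\lambda) = \mathbb E[e^{-\lambda X}]$. Then \eqref{eq:main-RV} becomes the Poincar\'e equation $\varphi(f(\lambda)) = f(c\lambda)$ for $\lambda \ge 0$. The proof has three parts: identify $c$, get existence from the Kesten--Stigum theorem, and get uniqueness by a contraction argument.

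\textbf{Step 1: $c = m$.} Suppose $X \ge 0$ with $\mathbb E[X] = 1$ is $N$-stable. Taking expectations in \eqref{eq:main-RV} and using Wald's identity gives $m\,\mathbb E[X] = c\,\mathbb E[X]$. Since $\mathbb E[X] = 1$, this forces $c = m$.

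\textbf{Step 2: existence.} Let $(Z_n)$ be the Galton--Watson process with offspring law $N$ and $Z_0 = 1$, and set $W_n = Z_n/m^n$, a nonnegative martingale. Under $m > 1$ and \eqref{eq:logN}, the Kesten--Stigum theorem gives that $W_n \to W$ almost surely and in $L^1$. In particular $\mathbb E[W] = 1$.

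Condition on the first generation. Then $Z_{n+1} = \sum_{k=1}^{Z_1} Z_n^{(k)}$, where the $Z_n^{(k)}$ are i.i.d.\ copies of $Z_n$, independent of $Z_1 \sim N$. Dividing by $m^{n}$ gives $m W_{n+1} = \sum_{k=1}^{Z_1} W_n^{(k)}$. Letting $n \to \infty$, we obtain $mW = \sum_{k=1}^{N} W^{(k)}$ almost surely, hence in law. So $X := W$ is $N$-stable with $c = m$.

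The main obstacle is the $L^1$ convergence, that is, the Kesten--Stigum theorem itself. This is exactly where \eqref{eq:logN} is needed, and I would cite it rather than reprove it. If a self-contained argument were required, I would use the size-biased tree spine decomposition of Lyons--Pemantle--Peres.

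\textbf{Step 3: uniqueness.} Let $f$ and $g$ be the Laplace transforms of two solutions. Both satisfy $f(m\lambda) = \varphi(f(\lambda))$, and both have $f(\lambda) = 1 - \lambda + o(\lambda)$ as $\lambda \to 0$, since $\mathbb E[X] = 1$. Put $h(\lambda) = |f(\lambda) - g(\lambda)|/\lambda$, so $h(\lambda) \to 0$ as $\lambda \downarrow 0$. Since $\varphi' \le m$ on $[0,1]$, the mean value theorem gives
$$
|f(m\lambda) - g(m\lambda)| \le m\,|f(\lambda) - g(\lambda)|, \qquad \text{hence } h(m\lambda) \le h(\lambda).
$$
Iterating downward, $h(\lambda) \le h(\lambda m^{-n})$ for every $n$, and the right side tends to $0$. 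Thus $h \equiv 0$, so $f = g$, and the laws coincide by uniqueness of Laplace transforms.

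This uniqueness step needs only $\mathbb E[X] = 1$, not \eqref{eq:logN}. The hypothesis \eqref{eq:logN} is used only for existence.
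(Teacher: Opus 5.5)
Your proof is correct and reconstructs the standard argument behind the result the paper simply cites from Athreya--Ney (Chapter I, Section 10): Wald's identity forces $c = \mathbb E[N]$, Kesten--Stigum gives existence of a mean-one solution via the first-generation decomposition, and the classical Harris contraction $h(m\lambda) \le h(\lambda)$ gives uniqueness. The paper gives no argument of its own beyond that reference, so your write-up fills in exactly what it takes as known. Your observation that uniqueness needs only $\mathbb E[X]=1$, and not \eqref{eq:logN}, is also correct.
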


We stress that even for $N$ satisfying $\mathbb E[N] > 1$ and~\eqref{eq:logN}, there exist $N$-stable $X$ with $\mathbb E[X] = \infty$. See the discussion below where we classify all $N$-stable $X$ for a given $N$. 

\begin{remark}
The following result was proved in \cite[Chapter I]{AthreyaBook}, Section 10, Theorem 3. Let $(N_k)$ be a branching process with $N$ offsprings and $c = \mathbb E[N] \in (1, \infty)$. This case is called {\it supercritical}. There exists a sequence $(C_k)$ of positive numbers such that 
\begin{equation}
\label{eq:conv}
\frac{C_{k+1}}{C_k} \to c,\quad \frac{N_k}{C_k} \stackrel{a.s.}{\to} X,
\end{equation}
where $X \ge 0$ is an $N$-stable random variable. There are two cases, when $c = \mathbb E[N] \in (1, \infty)$; see also discussion in \cite{Bingham1988}. For a conceptual proof, see \cite{Concept}. For other proofs, see \cite[Theorem 3.1]{Seneta1968},   \cite[Theorem 4.3]{Seneta1969}.
\begin{itemize}
\item If $\mathbb E[N\ln N] < \infty$, we can simply take $C_k = c^k$. Then $\mathbb E[X] = 1$.
\item If $\mathbb E[N\ln N] = \infty$, we need to take $C_k \ne c^k$, and $\mathbb E[X] = \infty$. 
\end{itemize}  
\label{rmk:conv}
\end{remark}

\begin{definition} We call such $X$ from Proposition~\ref{prop:n-ln-n} the {\it standard $N$-stable random variable}, and the distribution of $X$ is called the {\it standard $N$-stable distribution}. 
\end{definition}

\subsection{Strictly stable distributions on the real line} Recall a well-known classification of stictly stable distributions $Q$, see \cite[Chapter 1]{STbook} or \cite[Chapter V]{SteutelBook}. Its characteristic function (Fourier transform) 
$$
\int_{\mathbb R}e^{\mathrm{i}ux}\,Q(\mathrm{d}x) = e^{-g(u)},\quad \mathrm{i} =  \sqrt{-1},
$$
can be represented as
\begin{equation}
\label{eq:G}
g(u) = (\beta + \gamma\mathrm{i}\,\mathrm{sgn}(u))|u|^{\alpha}
\end{equation}
with $\alpha \in (0, 2]$, $\beta > 0$. Special cases:
\begin{itemize}
\item if $\alpha = 2$, then $\gamma = 0$ and $Q = \mathcal N(0, 2\beta)$;
\item if $\alpha = 1$, then $\gamma$ is unrestricted and $Q$ is shifted Cauchy.
\end{itemize}
Of course, we also have the trivial case $\beta = \gamma = 0$, when $Q = \delta_0$.

Denote the set of such functions $g$ as $\mathcal G$. Such distribution $Q$ is also called {\it strictly $\alpha$-stable} to stress its dependence upon the {\it index} $\alpha$. Note that the function $g$ satisfies 
\begin{equation}
\label{eq:homo}
g(cu) = c^{\alpha}g(u),\quad c > 0,\quad u \in \mathbb R. 
\end{equation}
A standard reference is the classic monograph \cite[Chapter 1]{STbook}: Equivalent Definitions 1.1.1, 1.1.4, 1.1.5, 1.1.6 of stable distributions, and Properties 1.2.6, 1.2.8 of strictly stable distributions. Also, see other monographs: \cite[Chapter 3]{FTbook}: Theorem 5.7.3 and discussions after this; a more recent book \cite[Chapter V]{SteutelBook}: Section 7 for general results about strict stability; Theorem 7.6 for strictly stable symmetric distributions; Theorem 3.5 for positive strictly stable symmetric distributions; Theorem 3.5 for positive strictly stable random variables. Finally, we point out to \cite[Chapter 4]{GnedenkoBook}, section 4.5, page 130. We stress that much of classic research involves {\it stable} not {\it strictly stable} distributions, which arise as weak limits 
\begin{equation}
\label{eq:stable-weak-limits}
\frac{X_1 + \ldots + X_n}{a_n} - b_n,\quad n \to \infty,
\end{equation}
for independent identically distributed $X_1, X_2, \ldots$ But we use strictly stable distributions here in this article, with $b_n = 0$. 

\subsection{Classification of $N$-stable $X$ for given $N$} In Theorem~\ref{thm:repr}, we classify $N$-stable distributions. This replicates the results \cite[Proposition 2.1, Proposition 3.1]{Anna} but without an artificial additional assumption in Remark~\ref{rmk:artificial} below. The $N$-stable laws are represented via a product involving a standard $N$-stable random variable, and an independent strictly $\alpha$-stable random variable. We impose the following conditions:
\begin{equation}
\label{eq:ln-N}
1 < \mathbb E[N]\quad \mbox{and}\quad \mathbb E[N\ln N] < \infty.
\end{equation}

\begin{theorem}
\label{thm:repr} Under the condition~\eqref{eq:ln-N}, any $N$-stable $X$ can be represented as the following product of random variables:
\begin{equation}
\label{eq:repr}
X \stackrel{d}{=} Y^{1/\alpha}Z,
\end{equation}
where $Y$ is the standard $N$-stable random variable, and $Z$ is a strictly $\alpha$-stable random variable independent of $Y$, for some $\alpha \in (0, 2]$. The characteristic function of $X$ can be represented as 
\begin{equation}
\label{eq:main}
f(u) = \mathbb E\left[e^{\mathrm{i}uX}\right] = L(g(u)).
\end{equation}
Here, $L(u) = \mathbb E[e^{-uY}]$ is the Laplace transform of this standard $N$-stable $Y$. Also, $g \in \mathcal G$ is defined via the characteristic function of $Z$, which is $e^{-g}$. Moreover, if $X$ is $N$-stable with index $\alpha$, then it satisfies~\eqref{eq:main-RV} with 
\begin{equation}
\label{eq:c}
c = (\mathbb E[N])^{1/\alpha}.
\end{equation}
Conversely, any $X$ having representation~\eqref{eq:repr}, or characteristic function~\eqref{eq:main}, is $N$-stable. 
\end{theorem}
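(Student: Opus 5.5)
The converse direction is a direct computation, so I will do it first. Write $m = \mathbb E[N]$ and let $\varphi$ be the generating function of $N$. By Proposition~\ref{prop:n-ln-n} the standard $Y$ satisfies $Y_1+\ldots+Y_N \stackrel{d}{=} mY$. In Laplace transforms this reads $\varphi(L(t)) = L(mt)$, and since both sides are analytic in $t$ it holds for all $t$ with $\mathrm{Re}\, t \ge 0$. Take $g \in \mathcal G$ of index $\alpha$ and put $c = m^{1/\alpha}$. By~\eqref{eq:homo}, $g(cu) = m\,g(u)$, and $\mathrm{Re}\, g \ge 0$, so $f = L\circ g$ is defined. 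Then
$$
f(cu) = L(m g(u)) = \varphi(L(g(u))) = \varphi(f(u)),
$$
which is~\eqref{eq:main-ChF}. Conditioning on $Y$ shows that $L(g(u))$ is the characteristic function of $Y^{1/\alpha}Z$, so~\eqref{eq:repr} and~\eqref{eq:main} describe the same law and are $N$-stable.

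For the direct direction I will linearize $\varphi$ by $L^{-1}$ (a Koenigs-type argument). Since $\mathbb E[Y]=1$, we have $L'(0) = -1 \neq 0$. Hence $L$ has an analytic inverse $L^{-1}$ on a neighbourhood $U$ of $1$ with $L^{-1}(1)=0$, and the identity $\varphi\circ L = L(m\,\cdot)$ gives $L^{-1}(\varphi(s)) = m L^{-1}(s)$ for $s$ near $1$.

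Now let $X$ be $N$-stable with constant $c$. The case $c\le 1$ must be excluded, using $m>1$ and nontriviality of $X$ (presumably via the argument behind Theorem~\ref{thm:main}), so I take $c>1$. For small $|u|$ we have $f(u)\in U$, and I define $g(u) = L^{-1}(f(u))$ there. The Poincar\'e equation gives $g(cu) = m g(u)$ for small $u$. I then extend $g$ to all of $\mathbb R$ by $g(u) := m^n g(u/c^n)$ for $n$ large; this is consistent by the same identity. Iterating~\eqref{eq:main-ChF} yields $f(u) = \varphi_n(f(u/c^n))$, where $\varphi_n$ is the $n$-th iterate, which is fine on $|s|\le 1$. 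Combined with $\varphi_n(L(t)) = L(m^n t)$, this gives $f(u) = L(m^n g(u/c^n)) = L(g(u))$ for every $u$, which is~\eqref{eq:main}.

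Next I show that $e^{-tg}$ is a characteristic function for every $t>0$. From $\mathbb E[Y]=1$ we get $f(u/c^n) = L(m^{-n}g(u)) = 1 - m^{-n}g(u) + o(m^{-n})$. Therefore
$$
f(u/c^n)^{\lfloor t m^n\rfloor} \to e^{-t g(u)},
$$
and the limit is continuous at $0$. By L\'evy's continuity theorem it is a characteristic function. So $e^{-g}$ is the characteristic function of an infinitely divisible $Z$, with $\mathrm{Re}\, g\ge 0$, satisfying
$$
e^{-g(cu)} = \left(e^{-g(u)}\right)^m .
$$

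The main obstacle is upgrading this single scaling relation to strict stability with $c = m^{1/\alpha}$. One relation $g(cu) = m g(u)$ a priori only gives a strictly semi-stable law, with $c^\alpha = m$ defining $\alpha$; this yields~\eqref{eq:c} but not $g\in\mathcal G$. My first attempt will be to exploit the Lévy–Khintchine representation of $g$: the relation forces the Lévy measure to satisfy $\nu(c\,\cdot) = m^{-1}\nu(\cdot)$, and I will then try to show that the representation $f = L\circ g$ must also hold for all rescalings. If that fails, I would fall back on the commuting-semigroup structure of Theorem~\ref{thm:commute}: produce a continuum of $N'$ for which $X$ is $N'$-stable, forcing $g(\lambda u) = \lambda^\alpha g(u)$ for all $\lambda$ in a dense set. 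If neither works, the honest conclusion is that $Z$ is only semi-stable, and the statement as written would need that additional hypothesis.
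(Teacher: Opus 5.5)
Your converse direction is correct, and conditioning on $Y$ with \eqref{eq:homo} is simpler than the paper's appeal to \cite{Anna}. The step you flagged, however, cannot be closed. Your closing sentence is the right conclusion: the statement is false for a single $N$.

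\textbf{Counterexample.} Take $N=N_p$ geometric, so $m=1/p$, $L(s)=1/(1+s)$ and $\varphi(L(s))=L(ms)$. Fix $\alpha\in(0,1)$ and put $c=m^{1/\alpha}$. Let $\nu=\sum_{k\in\mathbb Z}m^{-k}\delta_{c^k}$; this is a L\'evy measure on $(0,\infty)$ integrating $\min(1,x)$, because $m/c=c^{\alpha-1}<1$. Let $\Psi(u)=\int(1-e^{\mathrm{i}ux})\,\nu(\mathrm{d}x)$, so that $\Psi(cu)=m\Psi(u)$. Then $f=1/(1+\Psi)=L\circ\Psi$ is the characteristic function of $Z_Y$, where $(Z_t)$ is the subordinator with exponent $\Psi$ and $Y\sim\mathrm{Exp}(1)$ is independent of it. Moreover $\varphi(f(u))=1/(1+m\Psi(u))=f(cu)$, so this $X\ge 0$ is $N_p$-stable. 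But $f=L\circ g$ with $g\in\mathcal G$ would force $g=\Psi$, since $L$ is injective. The law $e^{-\Psi}$ has a discrete L\'evy measure, so it is not stable. Hence \eqref{eq:repr}, \eqref{eq:main} and Lemma~\ref{lemma:positive} all fail for this $X$.

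\textbf{Where the paper's proof breaks.} It breaks exactly at your obstacle. Step 3 obtains $(X_1+\cdots+X_{[a^k]})/c^k\to Z$ only along the geometric subsequence $[a^k]$, then declares $Z$ strictly stable ``by the classic theory''. Such subsequential limits are in general only strictly semi-stable, which is precisely your relation $e^{-g(cu)}=(e^{-g(u)})^m$. What can actually be established is your weaker version: $f=L\circ g$ with $e^{-g}$ strictly semi-stable of span $c=m^{1/\alpha}$. Equivalently, $X\stackrel{d}{=}Z_Y$ for a L\'evy process $(Z_t)$ with $\mathbb E[e^{\mathrm{i}uZ_1}]=e^{-g(u)}$, independent of $Y$.

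\textbf{Your fallbacks.} Neither can rescue the original claim. The discrete $\nu$ above satisfies $\nu(c\,\cdot)=m^{-1}\nu$, so the L\'evy--Khintchine route gives nothing more. Theorem~\ref{thm:commute} is itself proved from Theorem~\ref{thm:repr}, so invoking it is circular, and the same $X$ refutes it. Indeed, $N_{\sqrt p}$ commutes with $N_p$ by \eqref{eq:semigroup-property}. Yet $N_{\sqrt p}$-stability of $X$ would require $\Psi(c'u)=m^{1/2}\Psi(u)$ for some $c'$. Matching atoms of the two L\'evy measures forces $c'=c^{j}$ and $m^{j}=m^{1/2}$ for an integer $j$, which is impossible. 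Also, for general $N$ the semigroup $G(Y)$ can be discrete, as in \cite{Bunge}, so there need be no continuum of commuting $N'$ at all.

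\textbf{A gap in the part that is true.} Your derivation of the semi-stable representation needs an extra argument. In general $L$ is analytic only on $\mathrm{Re}\,z>0$ and merely $C^1$ up to the imaginary axis. If $Y$ has no exponential moment, $L$ does not continue analytically past $0$. This happens, for example, when $\mathbb E[N^2]=\infty$, since then $\mathbb E[Y^2]=\infty$ by Lemma~\ref{lemma:finite-moments}. So there is no analytic inverse on a neighbourhood of $1$.

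$L$ is injective on a small closed half-disc at $0$. Its image, however, is bounded near $1$ by the curve $t\mapsto\mathbb E[e^{-\mathrm{i}tY}]$, which lies strictly inside the unit circle for small $t\neq 0$. So the image is not a neighbourhood of $1$ in the closed disc. You must prove separately that $f(u)$ lies in it for small $u$, which essentially amounts to showing that $m^n(1-f(u/c^n))$ converges. This is the analytic input the paper imports from the transfer theorem of \cite{GnedenkoBook}. When $L$ does extend across the axis, as in the geometric case, your argument works as written.
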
 

As discussed in the proof of Lemma~\ref{lemma:tech}, the Laplace transform can be defined for $\mathrm{Re}(u) > 0$. We need to extend this Laplace transform to the complex half-plane, since $g$ can be complex-valued.

Separately, we state an important uniqueness lemma.

\begin{lemma} Assume $L_1$ and $L_2$ are Laplace transforms of two probability measures $Q_1$ and $Q_2$ on the positive half-line with mean $1$. Let $g_1, g_2 \in \mathcal G$ be such that $L_1\circ g_1 \equiv L_2 \circ g_2$. Then $L_1 \equiv L_2$ and $g_1 \equiv g_2$.
\label{lemma:unique}
\end{lemma}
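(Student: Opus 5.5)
We compare behavior near $u=0$ and then use homogeneity \eqref{eq:homo}. Write $g_j(u) = (\beta_j + \gamma_j\mathrm{i}\,\mathrm{sgn}(u))|u|^{\alpha_j}$ with $\beta_j > 0$. The trivial case $g_j \equiv 0$ gives $L_1\circ g_1 \equiv 1$, which forces $g_2 \equiv 0$ as well, since a Laplace transform of a probability measure not concentrated at $0$ has modulus strictly less than $1$ on $\mathrm{Re}(w) > 0$. So assume both $g_j$ are nontrivial.

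Since $Q_j$ has mean $1$, the Laplace transform satisfies $L_j(w) = 1 - w + o(|w|)$ as $w \to 0$ in the closed right half-plane $\mathrm{Re}(w)\ge 0$. This follows by dominated convergence, using $|e^{-wy} - 1 + wy| \le \min(2|w|y, |w|^2y^2/2)$, which is valid for $\mathrm{Re}(w)\ge0$. Hence, as $u \downarrow 0$,
\begin{equation*}
1 - L_j(g_j(u)) = (\beta_j + \gamma_j\mathrm{i})u^{\alpha_j}(1 + o(1)).
\end{equation*}
Equating the two expressions for $1 - L_1(g_1(u)) = 1 - L_2(g_2(u))$ gives the following. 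Dividing and letting $u\downarrow 0$, the ratio $u^{\alpha_1-\alpha_2}$ must tend to a finite nonzero limit, so $\alpha_1 = \alpha_2$. Then the limit yields $\beta_1 + \gamma_1\mathrm{i} = \beta_2 + \gamma_2\mathrm{i}$. Since $\beta_j$ and $\gamma_j$ are real, we get $\beta_1 = \beta_2$ and $\gamma_1 = \gamma_2$, and therefore $g_1 \equiv g_2 =: g$.

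Next, $L_1(g(u)) = L_2(g(u))$ for all $u>0$. As $u$ ranges over $(0,\infty)$, $g(u)$ traces the ray $\{t(\beta+\gamma\mathrm{i}) : t > 0\}$ in the open right half-plane, because $\beta>0$. Both $L_j$ are analytic on $\mathrm{Re}(w)>0$, and they agree on this ray, a set with accumulation points inside the domain. By the identity theorem, $L_1 \equiv L_2$ on the half-plane, and in particular on $(0,\infty)$, which gives $Q_1 = Q_2$.

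The main care point is the expansion $L_j(w) = 1 - w + o(|w|)$ along complex $w$ in the right half-plane, which is needed because $g$ may be complex. This is handled by the elementary bound above together with $\mathbb E[Y]=1$. The mean-$1$ normalization is essential: without it, a scaling $Y\mapsto aY$ combined with $g\mapsto g/a$ would break uniqueness. The lemma is exactly what pins down this scaling.
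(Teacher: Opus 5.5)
Your proof is correct and takes essentially the paper's route. The expansion $1-L_j(w)\sim w$ as $w\to0$ in the right half-plane forces $g_1(u)/g_2(u)\to1$ as $u\downarrow0$, which pins down $\alpha,\beta,\gamma$. The identity theorem on the ray $\{g(u):u>0\}\subset\mathbb H$ then gives $L_1\equiv L_2$. Your dominated-convergence bound actually proves the complex-direction expansion that Lemma~\ref{lemma:tech} only sketches, and you correctly take $u\downarrow0$ where the paper writes $u\to\infty$.

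One small quibble concerns your aside on $g_j\equiv0$. In that case the conclusion $L_1\equiv L_2$ is false, since any two $L_1,L_2$ satisfy $L_1(0)=L_2(0)=1$. So that case is excluded by the requirement $\beta>0$ in $\mathcal G$, rather than being handled by your argument.
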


\subsection{Removal of artificial assumptions} This result is important for the following subtle reason. Earlier, such representation result was proved by \cite{Anna} under an additional condition on the Laplace transform $L(u) = \mathbb E[e^{-uY}]$ of this standard $N$-stable $Y$. 

\begin{remark}
\label{rmk:artificial}
For any two infinitely divisible characteristic functions $e^{-f}$ and $e^{-g}$ on the real line, $L\circ f \equiv L\circ g$ implies $f\equiv g$. 
\end{remark}

This condition was first introduced in \cite{Szasz} and used for the converse of the {\it transfer theorem.} This theorem is about a transfer from weak limits of sums of independent random variables to weak limits of random sums of these variables, \cite[Chapter 4]{GnedenkoBook}: Section 4.1, Theorem 4.1.2, and references therein. Says \cite{Szasz}: {\it The following condition is presumably true, but so far no one was able to prove or disprove it.} It turns out from our research that we do not, in fact, need to prove this condition! Indeed, from 1973 until now, to the best of our knowledge, no one could prove this in the general form. There were many discussions of this condition in a recent book \cite{GnedenkoBook}, which is an unabridged republication from 1996: \cite[Chapter 4]{GnedenkoBook}, Section 2. To quote: {\it No one has managed either to prove this or to refute it yet.} There, it was proved in several cases. In \cite[Chapter 8]{KKRbook}, it was mentioned but unproved in Remark 8.8.16. 

\subsection{Examples of random stable variables} Here, we consider the three cases of $N$-stable distributions: (a) symmetric; (b) Gaussian; (c) on the half-line.

\begin{definition} The random variable $X$ (or its distribution $Q$) is called a {\it symmetric $N$-stable} if it is $N$-stable and $X \stackrel{d}{=} -X$. 
\end{definition}

\begin{lemma} Under the condition~\eqref{eq:ln-N}, any symmetric $N$-stable $X$ has the following characteristic function 
\begin{equation}
\label{eq:ChF-stable-symm}
f(u) = \mathbb E\left[e^{\mathrm{i}uX}\right] = L(\beta |u|^{\alpha})
\end{equation}
for some $\beta > 0$ and $\alpha \in (0, 2]$, where $L$ is the Laplace transform of the standard $N$-stable random variable $Y$. This $X$ can be represented as 
\begin{equation}
\label{eq:repr-N-stable-symm}
X \stackrel{d}{=} Y^{1/\alpha}Z,\quad \mathbb E\left[e^{\mathrm{i}uZ}\right] = \exp\left(-\beta |u|^{\alpha}\right).
\end{equation}
Here, $Z$ is independent of $Y$, and is symmetric $\alpha$-stable. Conversely, any $X$ having representation~\eqref{eq:repr-N-stable-symm}, or characteristic function~\eqref{eq:repr-N-stable-symm}, is symmetric $N$-stable. 
\label{lemma:symmetry}
\end{lemma}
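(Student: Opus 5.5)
We derive Lemma~\ref{lemma:symmetry} from Theorem~\ref{thm:repr} together with the uniqueness statement of Lemma~\ref{lemma:unique}. Let $X$ be a symmetric $N$-stable random variable. By Theorem~\ref{thm:repr}, its characteristic function is $f = L\circ g$ with $g(u) = (\beta + \gamma\mathrm{i}\,\mathrm{sgn}(u))|u|^{\alpha}\in\mathcal G$. Since $X$ is not identically zero, we have $\beta > 0$; the trivial case $X \equiv 0$ is excluded or treated separately. Symmetry $X \stackrel{d}{=} -X$ means $f(u) = f(-u)$ for all $u$, that is,
$$
L(g(u)) = L(g(-u)), \quad u \in \mathbb R.
$$
The function $\tilde g(u) := g(-u) = (\beta - \gamma\mathrm{i}\,\mathrm{sgn}(u))|u|^{\alpha}$ is again in $\mathcal G$: it corresponds to the strictly $\alpha$-stable variable $-Z$. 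For $\alpha = 2$ we already have $\gamma = 0$, and for $\alpha = 1$ the parameter $\gamma$ is unrestricted.

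Apply Lemma~\ref{lemma:unique} with $L_1 = L_2 = L$, $g_1 = g$ and $g_2 = \tilde g$. It gives $g \equiv \tilde g$, so $\gamma = -\gamma$, hence $\gamma = 0$ and $g(u) = \beta|u|^{\alpha}$. This proves~\eqref{eq:ChF-stable-symm}.

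The representation~\eqref{eq:repr-N-stable-symm} is then the representation~\eqref{eq:repr} from Theorem~\ref{thm:repr} with $e^{-g(u)} = \exp(-\beta|u|^\alpha)$. This is the characteristic function of a symmetric $\alpha$-stable $Z$. To check the formula directly, condition on $Y$:
$$
\mathbb E\bigl[e^{\mathrm{i}uY^{1/\alpha}Z}\bigr] = \mathbb E\bigl[\exp(-\beta Y|u|^{\alpha})\bigr] = L(\beta|u|^\alpha).
$$

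For the converse, suppose $X$ is given by~\eqref{eq:repr-N-stable-symm} or has characteristic function~\eqref{eq:ChF-stable-symm}. It is $N$-stable by the converse part of Theorem~\ref{thm:repr}, since $\beta|u|^\alpha \in \mathcal G$. It is symmetric because $f$ is real and even, or equivalently because $Z \stackrel{d}{=} -Z$ with $Z$ independent of $Y$ gives $Y^{1/\alpha}Z \stackrel{d}{=} -Y^{1/\alpha}Z$.

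The only non-routine step is eliminating $\gamma$. Everything hinges on Lemma~\ref{lemma:unique}, which is exactly the replacement for the unproved condition in Remark~\ref{rmk:artificial}. Once that lemma is available, the remaining work is bookkeeping of the special cases $\alpha = 1$ and $\alpha = 2$. We also need $L$ to be defined at the complex arguments $g(u)$ with $\mathrm{Re}\, g(u) = \beta|u|^\alpha \ge 0$, which Theorem~\ref{thm:repr} already provides.
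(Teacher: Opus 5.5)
Your proposal is correct and follows the paper's route: you reduce to the representation $f = L\circ g$ from Theorem~\ref{thm:repr} and then show that symmetry forces $\gamma = 0$. The only difference is that the paper just asserts $X \stackrel{d}{=} -X \Rightarrow Z \stackrel{d}{=} -Z$, whereas you justify this step by applying Lemma~\ref{lemma:unique} to $g$ and $u \mapsto g(-u)$, which makes the implication rigorous.
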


\begin{definition} Any $N$-stable random variable with index $\alpha = 2$ is called a {\it Gaussian $N$-stable random variable}.
\end{definition}

We stress its similarity with classic Gaussian random variables. In the literature, sometimes a Gaussian $N$-stable $X$ is defined using~\eqref{eq:main-RV} with~\eqref{eq:c} with $\alpha = 2$. An additional restriction 
$\mathbb E[X^2] = \infty$ is introduced in \cite[Definition 2]{Stable}. Confusingly, the opposite restriction $\mathbb E[X^2] < \infty$ was introduced in \cite[Chapter 4]{GnedenkoBook}, see Definition 4.6.1; or \cite[Definition 2]{Klebanov}, or \cite[Chapter 8]{KKRbook}, Definition 8.3.2. But we do not need these restrictions in the definition of a Gaussian $N$-stable random variable, as seen in the following lemma.

\begin{lemma} Under the condition~\eqref{eq:ln-N}, any Gaussian $N$-stable $X$ is symmetric: $X \stackrel{d}{=} -X$. It satisfies $\mathbb E[X^2] < \infty$. Taking $Y$ the standard $N$-stable random variable, with Laplace transform $L$, we can represent $X = \sqrt{Y}Z$, where $Z \sim \mathcal N(0, \sigma^2)$ is independent of $Y$. The characteristic function of $X$ is given by 
$$
f(u) = \mathbb E\left[e^{\mathrm{i}uX}\right] = L(\sigma^2u^2/2).
$$
\label{lemma:Gaussian}
\end{lemma}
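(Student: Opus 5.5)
We deduce Lemma~\ref{lemma:Gaussian} from Theorem~\ref{thm:repr}, taking $\alpha = 2$. The theorem gives
$$
X \stackrel{d}{=} Y^{1/2}Z, \qquad f(u) = L(g(u)),
$$
where $Y$ is the standard $N$-stable random variable, independent of $Z$, and $e^{-g}$ is the characteristic function of a strictly $2$-stable $Z$. From the classification~\eqref{eq:G} with $\alpha = 2$ we have $\gamma = 0$ and $g(u) = \beta u^2$. Hence $Z \sim \mathcal N(0, 2\beta)$. Setting $\sigma^2 := 2\beta$ gives $g(u) = \sigma^2u^2/2$, and therefore
$$
f(u) = L(\sigma^2u^2/2).
$$

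We must check that $\sigma^2 > 0$. The case $\beta = 0$ would give $g \equiv 0$, so $f \equiv L(0) = 1$ and $X \equiv 0$. Either we exclude this trivial case implicitly, or we allow $\sigma = 0$, where all claims hold trivially.

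Symmetry is immediate. Since $Z \stackrel{d}{=} -Z$ and $Z$ is independent of $Y \ge 0$, we get $\sqrt{Y}Z \stackrel{d}{=} -\sqrt{Y}Z$. Alternatively, $f(u) = L(\sigma^2u^2/2)$ is even and real.

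For the second moment we use independence and Tonelli:
$$
\mathbb E[X^2] = \mathbb E[Y]\,\mathbb E[Z^2] = 1\cdot\sigma^2 < \infty.
$$
Here $\mathbb E[Y] = 1$ holds by Proposition~\ref{prop:n-ln-n}, which applies because of condition~\eqref{eq:ln-N}. This step is the only point that needs care, and it amounts to recognising that finiteness of $\mathbb E[X^2]$ is inherited from the finite mean of the standard $N$-stable $Y$. In particular, the restriction $\mathbb E[X^2] = \infty$ from the literature is impossible under~\eqref{eq:ln-N}.

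There is no genuine obstacle beyond invoking Theorem~\ref{thm:repr}, whose uniqueness part relies on Lemma~\ref{lemma:unique}; all the difficulty lies in that theorem.
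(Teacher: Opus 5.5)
Your proposal is correct and follows essentially the same route as the paper: apply Theorem~\ref{thm:repr} with $\alpha = 2$, identify the strictly $2$-stable $Z$ as Gaussian, and conclude $\mathbb E[X^2] = \mathbb E[Y]\,\mathbb E[Z^2] < \infty$ from $\mathbb E[Y] = 1$. You also write out the symmetry, the identification $g(u) = \sigma^2u^2/2$, and the degenerate case $\beta = 0$, which the paper's proof leaves implicit.
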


The final case is a nonnegative $N$-stable random variable $X$. It is easier to study their Laplace transforms than characteristic functions. 

\begin{lemma} Under the condition~\eqref{eq:ln-N}, a nonnegative $N$-stable $X$ has Laplace transform 
$$
\mathbb E\left[e^{-uX}\right] = L(\beta u^{\alpha})
$$
for the Laplace transform $L$ of the standard $N$-stable random variable $Y$, and for some $\alpha \in (0, 1]$ and $\beta > 0$. This $X$ can be represented as $X \stackrel{d}{=} Y^{1/\alpha}Z$, where $Z$ is the stable positive random variable of index $\alpha$, independent of $Y$, defined via Laplace transform
\begin{equation}
\label{eq:Laplace-alpha}
\mathbb E\left[e^{-uZ}\right] = e^{-\beta u^{\alpha}},\quad u > 0.
\end{equation}
\label{lemma:positive}
\end{lemma}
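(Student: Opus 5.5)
The plan is to specialize Theorem~\ref{thm:repr} to nonnegative $X$. By that theorem, $X \stackrel{d}{=} Y^{1/\alpha}Z$ with $Y$ the standard $N$-stable variable and $Z$ strictly $\alpha$-stable, independent of $Y$. Equivalently, $f(u) = L(g(u))$ with $g(u) = (\beta + \gamma\mathrm{i}\,\mathrm{sgn}(u))|u|^\alpha$. We assume $X$ is not identically zero, since otherwise the statement is trivial. The task is twofold: show that $X \ge 0$ forces $Z \ge 0$ (so $\alpha \in (0,1]$ and $Z$ is a positive stable variable), and then convert the characteristic-function formula into the Laplace-transform formula.

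\textbf{Step 1: $X\ge 0$ forces $Z\ge 0$.} Since $Y \ge 0$, $\mathbb E[Y]=1$, and $Y$ is independent of $Z$, we have $\mathbb P(Y>0)>0$. On $\{Y>0\}$ the factor $Y^{1/\alpha}$ is positive, so the sign of $X$ equals the sign of $Z$ there. Hence
$$
0 = \mathbb P(X<0) \ge \mathbb P(Y>0)\,\mathbb P(Z<0),
$$
which gives $\mathbb P(Z<0)=0$. A nondegenerate strictly $\alpha$-stable law supported on $[0,\infty)$ must have $\alpha \in (0,1)$ and be totally skewed to the right, or else be degenerate. For $\alpha=1$, strict stability on the half-line allows only the point mass $Z = \beta$, i.e. $Z = \beta\delta_1$ with $g(u) = -\mathrm{i}\beta u$ in the paper's sign convention. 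This is consistent with the claim $\alpha\in(0,1]$, because $e^{-\beta u}$ is the Laplace transform of $\delta_\beta$. In all these cases the Laplace transform of $Z$ has the form $e^{-\beta u^\alpha}$ for some $\beta>0$, which is \eqref{eq:Laplace-alpha}. This is a classical fact; I will cite \cite[Chapter V]{SteutelBook}, Theorem 3.5, as the paper does.

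\textbf{Step 2: the Laplace transform.} Condition on $Y$ and use independence:
$$
\mathbb E[e^{-uX}] = \mathbb E\bigl[\mathbb E[e^{-uY^{1/\alpha}Z}\mid Y]\bigr] = \mathbb E\bigl[e^{-\beta u^\alpha Y}\bigr] = L(\beta u^\alpha).
$$
Here we apply \eqref{eq:Laplace-alpha} at the point $uY^{1/\alpha}$ and use $(uY^{1/\alpha})^\alpha = u^\alpha Y$.

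\textbf{Main obstacle.} The delicate point is Step 1: justifying that a representation $Y^{1/\alpha}Z$ which is nonnegative forces $Z$ itself to be nonnegative. The conditioning argument above handles it, provided $\mathbb P(Y>0)>0$, and this follows from $\mathbb E[Y]=1$. One remaining subtlety is that Theorem~\ref{thm:repr} fixes $\alpha$ only up to the pair $(\alpha,g)$. Lemma~\ref{lemma:unique} guarantees that $g$ is unique, and hence so are $\alpha$ and $\beta$.
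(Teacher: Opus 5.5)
Your proposal is correct and follows the paper's proof. Both specialize Theorem~\ref{thm:repr}, use the classification of nonnegative strictly stable laws to get $\alpha\in(0,1]$ and $\mathbb E[e^{-uZ}]=e^{-\beta u^{\alpha}}$, and then condition on $Y$ to obtain $L(\beta u^{\alpha})$. The one difference is that you justify $Z\ge 0$ via $\mathbb P(X<0)\ge \mathbb P(Y>0)\,\mathbb P(Z<0)$ with $\mathbb P(Y>0)>0$, whereas the paper simply asserts this step.
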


\begin{remark}
A nonnegative $N$-stable $X$ can have only index $\alpha \le 1$, not 
$\alpha \in (1, 2]$. One can view $Z$ as the subordinator process at a fixed time. 
\end{remark}

\subsection{Geometric stability} The most classic example is for the geometric $N_p$ for $p \in (0, 1)$, with distribution 
\begin{equation}
\label{eq:geometric}
\mathbb P(N_p = n) = p(1-p)^{n-1},\quad n = 1, 2, \ldots;
\end{equation}
and with expected value and the probability generating function
\begin{equation}
\label{eq:geometric-PGF}
\mathbb E[s^N] = \frac{ps}{1 - (1-p)s},\quad \mathbb E[N_p] = \frac1p.
\end{equation}
Then the standard $N_p$-stable distribution of every $p \in (0, 1)$ is the standard exponential with mean $1$: It has density and the Laplace transform 
\begin{equation}
\label{eq:exp-density}
p(x) = e^{-x},\quad x \ge 0;\quad L(u) = \mathbb E[e^{-uY}] = \frac{1}{1+u},\quad u \ge 0;\quad Y \sim \mathrm{Exp}(1). 
\end{equation}
A Gaussian $N_p$-stable distribution is also well-known: The {\it Laplace distribution} with density and the characteristic function
$$
p(x) = 0.5\sigma e^{-\sigma|x|},\quad x \in \mathbb R;\quad f(u) = \frac1{1 + \sigma^2u^2}.
$$
More generally, a symmetric $N_p$-stable distribution is known: This is the {\it Linnik distribution} (named after a Ukrainian mathematician) with characteristic function 
\begin{equation}
\label{eq:Linnik}
f(u) = \frac1{1 + \beta |u|^{\alpha}}.
\end{equation}
with $0 < \alpha \le 2$. See also \cite[Chapter 9]{KKRbook}, Subsection 9.4.2.1, Theorem 9.4.6. This distribution is absolutely continuous on the real line, but its density is explicitly known only for $\alpha = 2$, when it becomes the Laplace distribution. The positive $N_p$-stable distribution has Laplace transform with $\beta > 0$ and 
$\alpha \in (0, 1]$:
\begin{equation}
\label{eq:Kovalenko}
L(u) = \frac1{1 + \beta u^{\alpha}}.
\end{equation}
This is commonly known as the {\it Mittag-Leffler distribution}; see \cite[Chapter 9]{KKRbook}, Subsection 9.4.2.2, Theorem 9.4.9. Also, in \cite[Chapter 2]{GnedenkoBook}, the distribution with the Laplace transform~\eqref{eq:Kovalenko} was named the {\it Kovalenko distribution} after another Ukrainian mathematician, see (3.2) or page 30. Some authors call it {\it positive Linnik distribution}, to stress its similarity with the two-sided Linnik distribution from~\eqref{eq:Linnik}. This distribution is also absolutely continuous, but its density is explicitly known only for $\alpha = 1$ or $\alpha = 0.5$. An entire chapter \cite[Chapter 9]{KKRbook} is devoted to these geometric stable distributions. See also a great survey \cite{Koz1} with large bibliography. Such geometric stable distributions were studied as far back as in 1984 in \cite{Zolotarev} and also in one early article by T. Kozubowski \cite{Koz1994}. Further representation results for the Linnik and the Kovalenko distributions are given in \cite{KorolevZeifman}.

\subsection{The infinitely divisible case} The following example is well known, see \cite[Chapter I, Example 8.5; Chapter V, Example 13.2]{Harris},  \cite[Section 8]{B}, \cite[Section 6]{Stable}, Examples 2 and 4, \cite[Example 1]{Bunge}; \cite[Example 3]{Melamed}, and physical applications in \cite[Section 8]{Harris1948}. Here $X$ is a Gamma random variable with shape $1/k$ and certain rate parameter $\beta > 0$. We remind the readers that it has Laplace transform
\begin{equation}
\label{eq:Gamma}
\mathbb E[e^{-uX}] = (1 + \beta u)^{-1/k},\quad u > 0.
\end{equation}
and $N$ has probability generating function for some $p \in (0, 1)$: 
\begin{equation}
\label{eq:NB}
\varphi(s) = \frac{p^{1/k}s}{(1 - (1 - p)s^k)^{1/k}}.
\end{equation}
And $X$ is $N$-stable for any $p$ and $k$. This is related to the classic geometric-exponential pair from~\eqref{eq:exp-density}. Indeed, since the sum of $k$ i.i.d. Gamma from~\eqref{eq:Gamma} is exponential:
$$
X_1 + \ldots + X_k \sim  \mathrm{Exp}(1).
$$
The probability generating function from~\eqref{eq:NB} corresponds to the random variable $kM$. Here, $M$ is defined by its  probability generating function 
$$
\mathbb E[s^M] = \left(\frac{ps}{1 - (1-p)s}\right)^{1/k}
$$
and is negative binomial with shape $1/k$. The sum of $k$ independent identically distributed copies of this $M$ is, in fact, geometric, it is distributed as $N_p$ from~\eqref{eq:geometric}:
$$
M_1 + \ldots + M_k \stackrel{d}{=} N_p. 
$$
In Lemma~\ref{lemma:inf-div}, we generalize this for general infinitely divisible distributions. We remind the readers the classic definition. For background, see \cite{SteutelBook}. 

\begin{definition} A distribution $Q$ on the real line, or, equivalently, a random variable $X$, is called {\it infinitely divisible} if for each $k = 1, 2, \ldots$, we can represent $X$ as a sum of $k$ independent identically distributed random variables $Y_1, \ldots, Y_k$:
$$
X \stackrel{d}{=} Y_1 + \ldots + Y_k.
$$
\end{definition}

In terms of characteristic functions $f(u)$, or Laplace transforms $L(u)$, or probability generating functions $\varphi(s)$, a necessary and sufficient condition is that for every $k = 2, 3, \ldots$ the $k$th root of this function: $f^{1/k}(u)$, $L^{1/k}(u)$, $\varphi^{1/k}(s)$ must also be a characteristic function, or a Laplace transform, or a probability generating function of some distribution. We note that a characteristic function might be complex-valued. In this case, we use the main complex branch of the $k$th root, which maps $1$ to $1$ (and not to other unit roots, $\exp(\mathrm{i}m\pi/k)$ for $m = 1, \ldots, k-1$). This issue does not arise for probability generating functions or Laplace transforms, which are always real-valued and positive. 

\begin{lemma} Take infinitely divisible random variables $N = 0, 1, 2, \ldots$ and $X$ on the real line. Pick a $k = 2, 3, \ldots$ and decompose 
$$
N = M_1 + \ldots + M_k,\quad X = Y_1 + \ldots + Y_k
$$
into $k$ independent copies of $M$ and $Y$, respectively. Then $X$ is $N$-stable if and only if $Y$ is $kM$-stable. 
\label{lemma:inf-div}
\end{lemma}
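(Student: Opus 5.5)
The plan is to rewrite everything as functional equations and compare them, working with the characteristic function $f$ of $X$, the characteristic function $h$ of $Y$, the probability generating function $\varphi$ of $N$, and the probability generating function $\psi$ of $M$. By infinite divisibility (principal branches), $f = h^k$ and $\varphi = \psi^k$. The probability generating function of $kM$ is $s \mapsto \psi(s^k)$. By the Poincar\'e equation~\eqref{eq:main-ChF}, $X$ is $N$-stable with constant $c$ if and only if
$$
\varphi(f(u)) = f(cu),\quad u \in \mathbb R,
$$
and $Y$ is $kM$-stable with constant $c'$ if and only if
$$
\psi\bigl(h(u)^k\bigr) = h(c'u),\quad u \in \mathbb R.
$$
I would show that both hold with the same constant $c' = c$.

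For the direction from $Y$ to $X$: assume $\psi(h^k(u)) = h(cu)$. Raising both sides to the $k$th power gives $\psi^k(h^k(u)) = h^k(cu)$, that is, $\varphi(f(u)) = f(cu)$, so $X$ is $N$-stable. This direction uses nothing but the identities $\varphi = \psi^k$ and $f = h^k$.

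For the direction from $X$ to $Y$: assume $\varphi(f(u)) = f(cu)$. Then $\bigl(\psi(f(u))\bigr)^k = \bigl(h(cu)\bigr)^k$ for all $u$. Both $u \mapsto \psi(f(u))$ and $u \mapsto h(cu)$ are continuous functions of $u$ equal to $1$ at $u = 0$. Their ratio is therefore a continuous function with values in the $k$th roots of unity, so it is constant, and the constant is $1$. One point needs care: $h$ is an infinitely divisible characteristic function, so it never vanishes. Hence $h(cu) \ne 0$ and the ratio is well-defined; no zeros of $\psi(f(u))$ can occur either, because its $k$th power equals $h(cu)^k \ne 0$. This gives $\psi(f(u)) = h(cu)$, which is exactly the Poincar\'e equation for $Y$ and $kM$ with constant $c$, since $f = h^k$.

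The main obstacle is justifying the $k$th-root step carefully. That means confirming that $\psi$, although defined as a probability generating function on the closed unit disk, composes correctly with the complex values $f(u)$, which lie in the closed disk. It also means confirming that the principal branch used to define $\psi = \varphi^{1/k}$ and $h = f^{1/k}$ is compatible with this continuity argument. The nonvanishing of infinitely divisible characteristic functions, together with the connectedness of $\mathbb R$, is what makes the branch ambiguity disappear. For $\psi$ itself no branch issue arises, since $\psi$ is given directly as the probability generating function of $M$. A minor remaining point is that $N$-stability of $X$ should also be taken to include the trivial situation $X \equiv 0$, and the argument covers it, since then $h \equiv 1$.
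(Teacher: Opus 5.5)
Your proof is correct. It rests on the same idea as the paper's proof: write both stability conditions as Poincar\'e equations, use $\varphi = \psi^k$ (your $\psi$ being the PGF of $M$), $\mathbb E[s^{kM}] = \psi(s^k)$ and $f = h^k$, then pass between the two equations by taking $k$th powers or $k$th roots.

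The difference is the transform. The paper works with Laplace transforms and checks that $\varphi^{1/k}(L(u)) = L^{1/k}(cu)$ is equivalent to $\varphi(L(u)) = L(cu)$ for $u \ge 0$. All quantities there are nonnegative reals, so the $k$th root is unambiguous, and the paper simply calls the equivalence straightforward. The cost is that this argument literally covers only $X \ge 0$, while the lemma is stated for $X$ on the real line.

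Your characteristic-function version handles the general case. The extra work is exactly the step you isolated: $\psi(f(u))$ and $h(cu)$ are complex, so equality of their $k$th powers does not immediately give equality. Your fix is sound. You use that the infinitely divisible characteristic function $f$ never vanishes, so $h$ never vanishes either. Continuity, the value $1$ at $u=0$, and connectedness of $\mathbb R$ then force the ratio, a $k$th root of unity, to equal $1$. Your proof is thus a more complete version of the paper's.
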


\subsection{Commuting distributions} The uniqueness result in Lemma~\ref{lemma:unique} for the representation from Theorem~\ref{thm:repr} allows us to prove the following key statement. First, we consider the {\it composition operation} 
\begin{equation}
\label{eq:commute}
\varphi\circ \psi(s) \equiv \varphi(\psi(s))
\end{equation}
for the two probability generating functions $\varphi$ and $\psi$, or, equivalently, the corresponding distributions. This operation is clearly associative, so the set of all probabiltiy distributions on $\{0, 1, 2, \ldots\}$ is a semigroup under this operation. In terms of the random variables $M$ and $N$ with probability generating functions $\varphi$ and $\psi$, their composition $\varphi\circ\psi$ is also a probability generating function of the random variable $N_1 + \ldots + N_M$, where $N_1, N_2, \ldots$ are independent (of each other and $M$) copies of $N$. The unit element for this composition operation is $\varphi(s) = s$, corresponding to the variable $M = 1$. 

\begin{definition}
We say that the probability generating functions $\varphi$ and $\psi$, or their corresponding distributions, or random variables $M$ and $N$, {\it commute} if 
\begin{equation}
\label{eq:commuting-equation}
\varphi\circ\psi \equiv \psi\circ\varphi.
\end{equation}
\end{definition}

The key result is below. It was shown in other literature in particular cases, but not in this general form. For such results for Gaussian $N$-stable random variables, see \cite[Chapter 4]{GnedenkoBook}, Theorem 4.6.1 and Corollary 4.6.1; \cite[Theorem 1]{Klebanov}, \cite[Chapter 8]{KKRbook}, Theorem 8.3.4 and references therein; \cite[Theorem 2.1]{Melamed}. The property of commutative semigroups was also shown in \cite[Remark 2.4]{Koz1}. This survey \cite{Koz1} is very comprehensive for 1990s. However, they claim that geometric family is unique among such explicit commuting families. We discuss later that this is not true.  

\begin{theorem} Assume $\mathbb E[M] < \infty$, $\mathbb E[N\ln N] < \infty$, and let $X$ be a non-degenerate $N$-stable random variable. Then $M$ and $N$ commute if and only if $X$ is $M$-stable. In this case, $\mathbb E[M\ln M] < \infty$. Moreover, the  standard $N$-stable distribution and the standard $M$-stable distribution coincide.  The index $\alpha$ and the representation of $X$ from Theorem~\ref{thm:repr} for both $M$ and $N$ are the same. 
\label{thm:commute}
\end{theorem}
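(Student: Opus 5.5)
By Theorem~\ref{thm:repr}, the characteristic function of $X$ is $f = L\circ g$. Here $L$ is the Laplace transform of the standard $N$-stable $Y$, and $g \in \mathcal G$ is $\alpha$-homogeneous by \eqref{eq:homo}. Write $\psi$ for the generating function of $N$ and $\varphi$ for that of $M$. The standard variable satisfies the Poincar\'e equation $\psi(L(u)) = L(mu)$ with $m = \mathbb E[N]$. Restricting $f = L\circ g$ to values where $g$ is real and positive recovers $L$ on $(0,\infty)$ up to scaling, and analyticity extends this to the right half-plane. Both directions of the proof will be reduced to statements about $L$ alone.

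\emph{``$M$-stable $\Rightarrow$ commute''.} Suppose $\varphi(f(u)) = f(du)$ for some $d>0$. Then
\[
\varphi(\psi(f(u))) = \varphi(f(cu)) = f(dcu) = \psi(f(du)) = \psi(\varphi(f(u))).
\]
So $\varphi\circ\psi$ and $\psi\circ\varphi$ agree on the range of $f$. Since $f$ is continuous and nonconstant with $f(0)=1$, this range contains a curve ending at $1$. For $\alpha$-homogeneous $g$ with $\beta>0$ and small $u>0$, the values $L(g(u))$ sweep out an arc that accumulates at $1$. Both compositions are analytic in the open unit disk and continuous up to $1$, so the identity theorem requires a set with an accumulation point inside the disk, not only at its boundary point $1$. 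The cleanest fix is to work through $L$ directly. Use $\varphi(L(g(u))) = L(g(du)) = L(d^{\alpha}g(u))$. The left side is $\widetilde L\circ g$, where $\widetilde L := \varphi\circ L$ is a Laplace transform (the law of $Y_1+\dots+Y_M$, whose mean is $\mathbb E[M]<\infty$). After normalising means, Lemma~\ref{lemma:unique} gives $\varphi(L(s)) = L(\mu s)$ with $\mu = \mathbb E[M]$, for all $s$ in the right half-plane. Then $\varphi\circ\psi(L(s)) = L(\mu m s) = \psi\circ\varphi(L(s))$ on $s\in(0,\infty)$, and $L((0,\infty))$ is the interval $(q,1)$ with $q=\mathbb P(Y=0)<1$. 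Hence $\varphi\circ\psi\equiv\psi\circ\varphi$ by analyticity on $(q,1)$. The same relation $\varphi(L(s))=L(\mu s)$ shows that $Y$ is $M$-stable with mean $1$. Applying Lemma~\ref{lemma:unique} together with Remark~\ref{rmk:conv} (finite mean limit $\Leftrightarrow$ $\mathbb E[M\ln M]<\infty$), I would deduce $\mathbb E[M\ln M]<\infty$ and that $Y$ is the standard $M$-stable variable. Finally, $f = L\circ g$ is then the $M$-representation of $X$ with the same $\alpha$ and $g$.

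\emph{``Commute $\Rightarrow$ $M$-stable''.} Put $h := \varphi\circ L$. Then
\[
\psi(h(s)) = \varphi(\psi(L(s))) = \varphi(L(ms)) = h(ms),
\]
so $h$ is the Laplace transform of an $N$-stable nonnegative variable $Y'=Y_1+\dots+Y_M$ with mean $\mu=\mathbb E[M]<\infty$. If $M$ is not identically $0$, then $Y'/\mu$ is $N$-stable, nonnegative, with mean $1$. By the uniqueness in Proposition~\ref{prop:n-ln-n}, $Y'/\mu \stackrel{d}{=} Y$, so $\varphi(L(s)) = L(\mu s)$. Composing with $g$ and using homogeneity gives $\varphi(f(u)) = L(\mu g(u)) = f(\mu^{1/\alpha}u)$, so $X$ is $M$-stable with $c=\mu^{1/\alpha}$. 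The remaining claims then follow as in the first part.

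\emph{Main obstacle.} The subtle step is the normalisation of means and the use of the uniqueness statements. Degenerate cases ($M\equiv 0$, $M\equiv1$, or $\mathbb E[M]\le 1$) must be handled separately, using Theorem~\ref{thm:main} when $\mathbb E[M]>1$ is needed. I also expect care in deriving $\mathbb E[M\ln M]<\infty$ from the existence of a mean-one $M$-stable law, for which I would invoke the Kesten--Stigum dichotomy in Remark~\ref{rmk:conv}. Finally, in the first direction, Lemma~\ref{lemma:unique} must be applied with a possibly complex $g$, which requires $\varphi\circ L$ to extend to the half-plane; this holds because $|L|\le1$ there.
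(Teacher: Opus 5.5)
Your proof is correct. The direction ``commute $\Rightarrow$ $M$-stable'' is the paper's Step~2: $\varphi\circ L$ is the Laplace transform of $Y_1+\dots+Y_M$, which is $N$-stable with mean $\mu$. Proposition~\ref{prop:n-ln-n} then gives $\varphi(L(s))=L(\mu s)$, and homogeneity of $g$ transfers this to $f$.

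For ``$M$-stable $\Rightarrow$ commute'' you depart from the paper, and your reason for doing so is mistaken. The paper's Step~3 applies the identity theorem directly on the range of $f$, and this is legitimate. That range is connected and, since $X$ is non-degenerate, contains a point $z$ with $|z|<1$. A connected set with more than one point has no isolated points, so $z$ is an accumulation point inside $\mathbb D$. Concretely, for $u>0$ your arc satisfies $|L(g(u))|\le L(\mathrm{Re}\,g(u))<1$, so it accumulates at each of its own points in $\mathbb D$, not only at $1$.

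Your detour through Lemma~\ref{lemma:unique} is nevertheless valid: normalising $\varphi\circ L$ to mean one gives $\varphi(L(s))=L(\mu s)$ and $d^{\alpha}=\mu$, and you then compare on $(q,1)$. It costs more, because it needs the uniqueness lemma with complex $g$ (hence $\mathrm{Re}\,g>0$, and the extension of $\varphi\circ L$ to the half-plane). The paper's Step~3 uses only non-degeneracy of $X$. What the detour buys is that it immediately shows $Y$ is $M$-stable with constant $\mu$. The paper only recovers this by re-running Step~2 once commutation is known.

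On the remaining claims you are more careful than the paper, whose Step~4 merely asserts $\mathbb E[M\ln M]<\infty$. You leave one link implicit. Iterating $\varphi(L(s))=L(\mu s)$ gives $L(s)=\mathbb E\bigl[L(s\mu^{-k})^{M_k}\bigr]$, where $\mu>1$ by Theorem~\ref{thm:main}. Since $\ln L(t)\sim -t$ as $t\downarrow 0$ and $M_k/\mu^k\to W$ almost surely, bounded convergence yields $L(s)=\mathbb E[e^{-sW}]$. Hence $\mathbb E[W]=1$, so $W$ is not identically zero, and Kesten--Stigum forces $\mathbb E[M\ln M]<\infty$.

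Finally, your flag on $M\equiv 0$ is justified but understated, because this case cannot be handled at all. Here $\varphi\equiv 1$ commutes with every $\psi$, yet no non-degenerate $X$ is $0$-stable. So it is a genuine exception to the statement, and the paper's Step~2 divides by $\mathbb E[M]=0$ there.
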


\begin{remark}
As a corollary of Theorem~\ref{thm:commute}, we get that $M$ and $N$ commute if and only if their standard random stable distributions coincide. 
\end{remark}

\begin{lemma}
\label{lemma:curious}
Assume $\mathbb E[N\ln N] < \infty$. If $X$ and $Y$ are $N$-stable, and $X$ is $M$-stable, then $Y$ is also $M$-stable. In this case, $\mathbb E[M\ln M] < \infty$, and $\mathbb E[N] > 1$, and $\mathbb E[M] > 1$. 
\end{lemma}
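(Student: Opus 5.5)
The plan is to reduce everything to Theorem~\ref{thm:commute} and Theorem~\ref{thm:repr}.

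\emph{Step 1: preliminary facts.} From the hypothesis $\mathbb E[N\ln N]<\infty$ we get $\mathbb E[N]<\infty$. Assume, as implicit in the statement, that $X$ and $Y$ are not identically zero and $N$ is not identically $0$ or $1$. Then Theorem~\ref{thm:main}, applied to the $N$-stable $X$, gives $1<\mathbb E[N]<\infty$, so condition~\eqref{eq:ln-N} holds.

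Next we need $\mathbb E[M]<\infty$, so that Theorem~\ref{thm:commute} applies. The same Theorem~\ref{thm:main}, applied to the $M$-stable $X$, gives $1<\mathbb E[M]<\infty$. Here we must exclude the degenerate cases $M\equiv 0$ and $M\equiv 1$. If $M\equiv 1$, then $X\stackrel{d}{=}cX$ with $c\ne 1$, which forces $X\equiv 0$ unless $c=1$. I expect this corner case to be the only delicate point. It is handled either by noting that Definition~\ref{def:main-RV} with $M\equiv1$, $c=1$ is trivial, or by assuming non-degenerate $M$ as the paper presumably intends. With that settled, $\mathbb E[M]>1$ follows.

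\emph{Step 2: commutation.} Apply Theorem~\ref{thm:commute} to the pair $(M,N)$ and the non-degenerate $N$-stable $X$. Since $X$ is $M$-stable, $M$ and $N$ commute, and moreover $\mathbb E[M\ln M]<\infty$.

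\emph{Step 3: transfer to $Y$.} Apply Theorem~\ref{thm:commute} again, now to the $N$-stable $Y$. This is legitimate since $\mathbb E[M]<\infty$ and $\mathbb E[N\ln N]<\infty$, and $Y$ is non-degenerate. Because $M$ and $N$ commute, the ``if'' direction shows that $Y$ is $M$-stable.

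Alternatively, Step 3 can be checked directly through Theorem~\ref{thm:repr}. Write the characteristic function of $Y$ as $L_N\circ g$, where $L_N$ is the Laplace transform of the standard $N$-stable law. By Theorem~\ref{thm:commute} the standard $M$- and $N$-stable laws coincide. The converse part of Theorem~\ref{thm:repr}, applied to $M$, then gives that $L_M\circ g=L_N\circ g$ is $M$-stable. Either route finishes the argument.
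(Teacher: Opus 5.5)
Your proposal is correct and follows the paper's proof, which consists of two consecutive applications of Theorem~\ref{thm:commute}: first to $X$, to get that $M$ and $N$ commute (and $\mathbb E[M\ln M]<\infty$), then to $Y$, to transfer $M$-stability. Your Step~1 is a useful addition, because the paper never checks the hypothesis $\mathbb E[M]<\infty$ of Theorem~\ref{thm:commute}. One correction there: for $M\equiv 1$ (with $c=1$) the claim $\mathbb E[M]>1$ is actually false, so you must assume $M\not\equiv 1$ rather than dismiss that case as trivial.
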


Examples of  commuting semigroups include geometric distributions from~\eqref{eq:geometric} and Sibuya distributions from Remark~\ref{rmk:Sibuya}. For each of the two families $(N_p,\, p \in (0, 1))$, their probability generating functions $G_p(s) := \mathbb E[s^{N_p}]$ satisfy 
\begin{equation}
\label{eq:semigroup-property}
G_p\circ G_q = G_q\circ G_p = G_{pq}.
\end{equation}
Recall from Remark~\ref{rmk:Sibuya} that there is no Sibuya-stable random variable.

\section{Composition Semigroups} 
\label{sec:semigroup}

\subsection{Introduction to the problem} Fix a probability measure $Q$ on the real line. In this section, we give an overview of known results and add a few new ones on the following topic: Find the set of all distributions $P$ on the nonnegative integers such that $Q$ is $P$-stable. Let $N \sim P$ and assume $\mathbb E[N \ln N] < \infty$. Then 
from Theorem~\ref{thm:commute} and Lemma~\ref{lemma:curious}, it is sufficient to consider $Q$ which is the standard $N$-stable for any such $N$. This allows to use the Laplace transform $L$ of $Q$. Such $X \sim Q$ satisfies $X \ge 0$ and $\mathbb E[X] = 1$. 

However, we stress that not every $X \ge 0$ can have nontrivial $N$ such that $X$ is $N$-stable. First, to be $N$-stable for at least one $N$ with $1 < \mathbb E[N] < \infty$, the distribution of $X$ must be absolutely continuous  on $(0, \infty)$; see \cite[Chapter I]{AthreyaBook}, Section 10, Theorem 4; Secton 12, Corollary 1; also \cite[Chapter 1]{Harris}, Theorem 8.3. However, this absolute continuity is very far from a sufficient condition for existence of $N$ such that $Q$ is $N$-stable. Recall the pair of negative binomial $N$ and Gamma $X$ random variables from~\eqref{eq:Gamma} and~\eqref{eq:NB} such that $X$ is $N$-stable. Compare it with Lemma~\ref{lemma:Gauss}.

\begin{lemma}
A Gamma random variable with shape parameter $\alpha \ne 1/n$ for $n = 1, 2, \ldots$ is not $N$-stable for any $N$. 
\label{lemma:Gauss}
\end{lemma}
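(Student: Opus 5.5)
We argue by contradiction. Let $X$ be Gamma with shape $\alpha$ and, after rescaling, rate $1$, so that
$$L_X(u)=\mathbb E[e^{-uX}]=(1+u)^{-\alpha}.$$
Suppose $X$ is $N$-stable with some $c>0$. Since $X\ge 0$, the Poincare equation \eqref{eq:main-ChF} holds for Laplace transforms:
$$\varphi\bigl((1+u)^{-\alpha}\bigr)=(1+cu)^{-\alpha},\qquad u\ge 0.$$
We exclude degenerate cases first. $N\equiv 1$ would force $c=1$, which is trivial; we assume, as Theorem~\ref{thm:main} does, that $N$ is nondegenerate in this sense. By Theorem~\ref{thm:main} we then have $1<\mathbb E[N]<\infty$. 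Since $\mathbb E[X]=\alpha<\infty$, Wald's identity gives $c=\mathbb E[N]>1$.

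Now set $s=(1+u)^{-\alpha}\in(0,1]$, so that $u=s^{-1/\alpha}-1$. This gives an explicit formula:
$$\varphi(s)=\bigl(1+c(s^{-1/\alpha}-1)\bigr)^{-\alpha}=\frac{s}{\bigl(c-(c-1)s^{1/\alpha}\bigr)^{\alpha}}.$$
Writing $p=1/c\in(0,1)$, this is
$$\varphi(s)=p^{\alpha}\,s\,\bigl(1-(1-p)s^{1/\alpha}\bigr)^{-\alpha}.$$
This matches \eqref{eq:NB} with $k=1/\alpha$. The task is to show that this function is a probability generating function only when $1/\alpha$ is a positive integer.

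The key step is to use that $\varphi$ is a power series in $s$ with nonnegative coefficients, convergent on $[0,1]$. It therefore extends analytically to the disc $|s|<1$, and in particular is analytic at $s=0$ in the variable $s$. Expand the explicit formula instead:
$$\varphi(s)=p^{\alpha}\sum_{j\ge 0}\binom{-\alpha}{j}(-(1-p))^{j}\,s^{1+j/\alpha}.$$
All coefficients here are strictly positive for $j\ge 0$, because $\binom{-\alpha}{j}(-1)^j=\alpha(\alpha+1)\cdots(\alpha+j-1)/j!>0$. This generalized power series (with exponents $1+j/\alpha$) converges for small $s>0$.

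Uniqueness of such expansions then decides the matter. Two generalized Dirichlet-type series $\sum a_\lambda s^\lambda$ with distinct real exponents that agree on an interval $(0,\varepsilon)$ must have identical coefficients. One proves this by subtracting the two series and letting $s\to0^+$, which inductively kills the smallest exponent. Applying this to the series above and the ordinary power series of $\varphi$, every exponent $1+j/\alpha$ with nonzero coefficient must be an integer. Taking $j=1$ forces $1/\alpha\in\mathbb Z_{>0}$, that is, $\alpha=1/n$. This contradicts the hypothesis, so no such $N$ exists.

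The main obstacle is rigor in this comparison step. It needs the uniqueness argument for expansions with non-integer exponents, which is elementary but must be stated. It also requires the earlier reduction that legitimately excludes $c=1$, which corresponds to $N\equiv1$. An alternative route to the same conclusion is to differentiate both sides near $0$ and observe that $\varphi(s)-p^\alpha s\sim \mathrm{const}\cdot s^{1+1/\alpha}$ as $s\to0^+$. For an analytic function, this forces $1+1/\alpha$ to be an integer.
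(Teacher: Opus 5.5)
Your proof is correct and follows the paper's argument. You invert $L(u)=(1+u)^{-\alpha}$, obtain the explicit generating function $\varphi(s)=s\bigl(c-(c-1)s^{1/\alpha}\bigr)^{-\alpha}$ from Bunge's formula, and note that its expansion contains a term $s^{1+1/\alpha}$, which is impossible for a power series unless $1/\alpha$ is a positive integer. You also supply details the paper leaves implicit: why $c=\mathbb E[N]>1$, that the coefficient of $s^{1+1/\alpha}$ is nonzero, and why expansions with non-integer exponents are unique. Your asymptotic form $\varphi(s)-p^{\alpha}s\sim \mathrm{const}\cdot s^{1+1/\alpha}$ as $s\to 0^+$ is the cleanest way to make that step rigorous.
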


The idea of the proof of Lemma~\ref{lemma:Gauss} is to consider the behavior of the probability generating function of $N$ at $s = 0$. From~\eqref{eq:main-ChF}, which involves the behavior of the Laplace transform of $X$ at infinity. This approach is useful to disprove that $\varphi$ is a probability generating function in other cases. Such distribution $Q$ might have an atom at $0$, that is, have $Q({0}) = \mathbb P(X = 0) > 0$; and this happens if and only if $N$ has an atom at zero: $\mathbb P(N = 0) > 0$. By the way, the case of an atom at zero can be reduced to the case of no atom at zero. See \cite[Chapter I]{AthreyaBook}, Section 12; \cite[Section 9]{B}; \cite[Section 3]{Harris1948}, Definitions 3.1, 3.2; Theorem 3.2. 

\subsection{Analytic description} Recall that $Y$ is a standard $N$-stable random variable. For a given $Y$, let us describe such $N$ analytically. Let $\pi = \mathbb P(Y = 0)$. The Laplace transform $L$ is a one-to-one strictly decreasing function $[0, \infty) \to (\pi, 1]$, and we can find its inverse $L^{\leftarrow}$ (denoted using the arrow to distinguish it from $1/L$): 
\begin{equation}
\label{eq:inverse-Laplace}
L(L^{\leftarrow}(s)) \equiv s,\quad s \in (\pi, 1].
\end{equation}
Recall that the functional equation~\eqref{eq:main-ChF} holds for the Laplace transform $L$ of $Y$ as well as for its characteristic function $f$:
\begin{equation}
\label{eq:main-Laplace}
\varphi(L(u)) = L(cu),\quad u \ge 0.
\end{equation}
Using~\eqref{eq:inverse-Laplace}, we rewrite~\eqref{eq:main-Laplace} as {\it Schroder functional equation}:
$$
L^{\leftarrow}(\varphi(s)) = cL^{\leftarrow}(s),\quad s \in (\pi, 1]. 
$$
In complex analysis, Schroder's equation is well known, see \cite{Shapiro} for a bibliography and a survey of results. We can also rewrite~\eqref{eq:main-Laplace} using~\eqref{eq:inverse-Laplace} as follows: 
\begin{equation}
\label{eq:Bunge}
\varphi(s) = L(cL^{\leftarrow}(s)),\quad s \in (\pi, 1].
\end{equation}
Equation~\eqref{eq:Bunge} is the key formula in a seminal article \cite{Bunge} on random stability. It will play an important role in this article as well.

This defines the probability generating function on the interval $(\pi, 1] \subseteq [0, 1]$. Maybe this is not the entire $[0, 1]$, but even if this interval is smaller, it is large enough to uniquely determine the function $\varphi$. Indeed, this function $\varphi$, as any probability generating function, is analytic in the unit disc $\mathbb D := \{z \in \mathbb C\mid |z| < 1\}$. Therefore, the values of $\varphi$ on this interval uniquely determine it. 

\subsection{The two semigroups} Take any probability measure $Q$ on the half-line with mean $1$. We can define two semigroups:

\begin{itemize}
\item The set $G(Q)$, or, equivalently, $G(X)$, of all probability generating functions 
$\varphi$ (or, equivalently, all distributions $P$ on $\{0, 1, 2, \ldots\}$) such that $Q$ is $P$-stable. This is a semigroup with respect to the composition operation. 
\item The set $S(X)$ or $S(Q)$ of all $c \ge 1$ such that the right-hand side of~\eqref{eq:Bunge} is a probability generating function. This is a semigroup with respect to the standard multiplication operation for real numbers. 
\end{itemize}

For a given $Q$, one of the two possibilities exist. Either there is no $N$ except $N \equiv 1$ such that~\eqref{eq:main-RV} holds. In this case, $G(Q)$ contains only the identity probability generating function $\varphi(s) \equiv s$; and $S(Q) = \{1\}$. Or $Q$ is $N$-stable for some nontrivial $N$. The main formula~\eqref{eq:Bunge} defines a mapping $\mathcal M : S(Q) \to G(Q)$ as follows: 
\begin{equation}
\label{eq:mapping}
\mathcal M : c \mapsto L(cL^{\leftarrow}(\cdot))
\end{equation}

\begin{lemma} The set $G(Q)$ is closed under composition and weak convergence. The set $S(Q)$ is closed under multiplication and under the usual convergence of real numbers. The mapping $\mathcal M$ from~\eqref{eq:mapping} is a bijection, and it preserves the semigroup operation, as well as convergence. If $1$ is the limit point of $S(Q)$, then $S(Q) = [1, \infty)$. 
\label{lemma:limit-point}
\end{lemma}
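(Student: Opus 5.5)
The argument has four parts: closure of $G(Q)$, closure of $S(Q)$, the properties of $\mathcal M$, and the limit-point statement. Throughout, $Q$ is the law of $X\ge 0$ with $\mathbb E[X]=1$, $L$ is its Laplace transform, and $L^{\leftarrow}$ is its inverse on $(\pi,1]$.

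\textbf{Closure of $G(Q)$.} Suppose $\varphi(L(u))=L(c_1u)$ and $\psi(L(u))=L(c_2u)$. Then
\[
\varphi(\psi(L(u)))=\varphi(L(c_2u))=L(c_1c_2u),
\]
so $\varphi\circ\psi\in G(Q)$.

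Now let $\varphi_n\in G(Q)$ converge weakly to a probability generating function $\varphi$ on $\{0,1,2,\dots\}$. I would first show that the constants $c_n$ are bounded. Since $Q$ has mean $1$, \eqref{eq:main-Laplace} gives $c_n=\mathbb E[N_n]$, because $\mathbb E[X]=1$ forces the scaling constant to equal the mean. A bounded subsequence is then extracted. If $c_n\to\infty$ along a subsequence, then $L(c_nu)\to\pi$ for every $u>0$, while $\varphi_n(L(u))\to\varphi(L(u))$. The function $\varphi(L(u))$ tends to $1$ as $u\to 0$, but $\pi<1$; this is a contradiction. So along a subsequence $c_n\to c$, and in the limit $\varphi(L(u))=L(cu)$. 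Hence $\varphi\in G(Q)$, possibly with $c=1$, which forces $\varphi(s)=s$.

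\textbf{Closure of $S(Q)$.} By the definition of $S(Q)$ and \eqref{eq:Bunge}, $c\in S(Q)$ means that $\varphi_c:=L(cL^{\leftarrow}(\cdot))$ is a probability generating function. Closure under products follows from
\[
\varphi_{c_1}\circ\varphi_{c_2}=\varphi_{c_1c_2},
\]
which is the same computation as above. For limits, if $c_n\to c$ then $\varphi_{c_n}\to\varphi_c$ pointwise on $(\pi,1]$. A pointwise limit of probability generating functions on an interval that is the limit of pgfs in the sense of coefficient convergence needs care. I would use the fact that pgfs are uniformly bounded analytic functions on $\mathbb D$. By Montel's theorem a subsequence converges locally uniformly to an analytic $\Phi$ with nonnegative Taylor coefficients. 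Since $\Phi=\varphi_c$ on $(\pi,1)$, and $\Phi(1^-)=\lim_{s\to1}L(cL^{\leftarrow}(s))=1$, $\Phi$ is a pgf and $c\in S(Q)$. The same argument proves weak closure if weak convergence of pgfs is needed.

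\textbf{The mapping $\mathcal M$.} $\mathcal M$ maps $S(Q)$ into $G(Q)$ by construction. It is surjective by \eqref{eq:Bunge}. It is injective because $L$ is strictly decreasing, so $c$ is recovered from $\varphi(L(u))=L(cu)$. It preserves the operations by the composition identity above. It preserves convergence in both directions by the arguments above, using $c=\mathbb E[N]=\varphi'(1)$ together with the continuity just shown.

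\textbf{The limit-point claim.} This is the main step. Suppose $c_n\in S(Q)$ with $c_n\downarrow1$ and $c_n>1$. The logarithms $\log c_n$ lie in the additive semigroup $\log S(Q)$ and tend to $0$. The multiples $\{k\log c_n\}_{k\ge 1}$ form a $\log c_n$-net of $[0,\infty)$. Hence $\log S(Q)$ is dense in $[0,\infty)$, and since it is closed, $S(Q)=[1,\infty)$. The hardest point is the closedness of $S(Q)$ under limits, which I would secure through the Montel/normal-families argument: nonnegativity of the coefficients passes to the limit, and the value at $1$ is controlled via $L(0)=1$.
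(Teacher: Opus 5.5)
Your proof is correct. For the final claim it is the paper's argument. The paper takes $c\in S(Q)\cap(1,b/a)$, notes that some power $c^n$ falls in $(a,b)$, and then uses that $S(Q)$ is closed; this is your $\log c_n$-net argument written multiplicatively.

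The difference is in the other claims. The paper does not prove these at all; it reads them off from Proposition~1 of Bunge's article, with $c=e^t$. You prove them directly:
\begin{itemize}
\item closure under composition via $\mathcal M(c_1)\circ\mathcal M(c_2)=\mathcal M(c_1c_2)$;
\item weak closure of $G(Q)$ by ruling out $c_n\to\infty$ through $\pi=\mathbb P(X=0)<1$, then passing to the limit using continuity of $L$;
\item closedness of $S(Q)$ by a Montel/normal-families argument, where nonnegative coefficients survive the limit and $\Phi(1^-)=L(0)=1$ keeps the limit a proper distribution;
\item injectivity of $\mathcal M$ from the strict monotonicity of $L$.
\end{itemize}
This makes the proof self-contained and shows where the closedness of $S(Q)$, on which the density step depends, actually comes from. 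The paper's version is shorter but delegates all of that to the reference.

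Two small points to tighten. First, you use $c_n\ge 1$, so that the limit $c$ is positive and lies in $[1,\infty)$, and so that the constant of any $\varphi\in G(Q)$ lies in $S(Q)$. This follows from $c_n=\mathbb E[N_n]$ combined with Theorem~\ref{thm:main}, not from the mean identity alone. Second, for convergence of the full sequence $\mathcal M(c_n)\to\mathcal M(c)$, you should note that every Montel subsequential limit equals $\mathcal M(c)$ by the identity theorem. You should also note that convergence of the coefficients to a probability mass function is weak convergence.
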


Lemma~\ref{lemma:finite-moments} is an important result about finite moments for distributions in $G(Q)$. 

\begin{lemma} 
Pick a distribution $Q$ on $[0, \infty)$, and assume $G(Q) \ne \{1\}$ is nontrivial. Then let $N$ be any nontrivial random variable with distribution from $G(Q)$, and let $Y$ be the standard $N$-stable random variable. Then:
\begin{itemize} 
\item $\mathbb E[N\ln N] < \infty$ if and only if $\mathbb E[Y] < \infty$;
\item $\mathbb E[N\ln^{a+1} N] < \infty$ for a fixed $a > 0$ if and only if $\mathbb E[Y\ln^{a}Y] < \infty$;
\item $\mathbb E[N^k] < \infty$ for a fixed $k = 2, 3, \ldots$ if and only if $\mathbb E[Y^k] < \infty$. 
\end{itemize}
\label{lemma:finite-moments}
\end{lemma}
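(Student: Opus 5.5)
The plan is to reduce the statement to classical moment results for the limit of a supercritical Galton--Watson process. First I will identify $Y$ with that limit. Let $\varphi$ be the probability generating function of $N$. By Theorem~\ref{thm:main}, nontriviality of $G(Q)$ forces $1<c_0:=\mathbb E[N]<\infty$. Let $(N_k)$ be the branching process with offspring law $N$. By Remark~\ref{rmk:conv} there are constants $C_k$ with $C_{k+1}/C_k\to c_0$ and $N_k/C_k\to W$ almost surely, where $W\ge 0$ is $N$-stable. Here "the standard $N$-stable $Y$" must mean the nonnegative $N$-stable law, normalized to mean $1$ when $\mathbb E[N\ln N]<\infty$; this is the case covered by Proposition~\ref{prop:n-ln-n}. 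In the complementary case I read it as the nonnegative $N$-stable law up to scale, for example $Q$ itself.

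The first substantive step is a uniqueness claim: every nonnegative nondegenerate $N$-stable variable with index $1$ equals $W$ up to a positive scalar. I will take this from Seneta's theorem (\cite[Theorem 3.1]{Seneta1968}), which states that solutions of $\varphi(L(u))=L(c_0u)$ among Laplace transforms are unique up to $u\mapsto \lambda u$. I will also check that the constant $c$ in~\eqref{eq:main-Laplace} is forced to equal $c_0$ for $Y$. To do this, differentiate~\eqref{eq:main-Laplace} at $u=0^+$ in the finite-mean case. In the infinite-mean case, use the regular variation of $1-L$ at $0$ provided by Seneta's construction, which forces index $1$. All three moment conditions are invariant under scaling $Y\mapsto\lambda Y$, so the problem reduces to $Y=W$.

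With $Y=W$, the first bullet is the Kesten--Stigum theorem (\cite[Chapter I]{AthreyaBook}, Section 10, Theorem 2, together with Remark~\ref{rmk:conv}): $\mathbb E[W]<\infty$ if and only if $\mathbb E[N\ln N]<\infty$. The second and third bullets are the Bingham--Doney theorems for supercritical Galton--Watson processes. I would rather give a self-contained argument via the functional equation $\varphi(L(u))=L(c_0u)$, arguing separately for each bullet.

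For the third bullet, $\mathbb E[W^k]<\infty$ means $L$ is $k$ times differentiable at $0^+$. Differentiating $\varphi\circ L$ $k$ times via Fa\`a di Bruno gives $\varphi^{(k)}(1)(L'(0))^k$ plus terms involving lower derivatives of $\varphi$ and $L$, all balanced against $c_0^kL^{(k)}(0)$. Since $c_0^k\neq c_0$, induction on $k$ yields the equivalence in both directions. The converse direction, from moments of $N$ to moments of $W$, is cleaner through the martingale $N_k/c_0^k$ and Burkholder's inequality for its increments.

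For the second bullet, I will use Tauberian arguments. Set $h(u)=1-L(u)-u$ in the mean-$1$ case. Then $\mathbb E[W\ln^aW]<\infty$ is equivalent to integrability of $h(u)u^{-2}\ln^{a-1}(1/u)$ near $0$, and similarly for $\varphi$ near $s=1$. Iterating the functional equation expresses $h$ as a telescoping series $\sum_j c_0^{-j}\,\delta(L(c_0^{-j}u))$, where $\delta(s)=\varphi(s)-1+c_0(1-s)$. Summing over the geometric scales $c_0^{-j}$ converts each $\ln^{a}$ weight of $W$ into a $\ln^{a+1}$ weight of $N$; this is the origin of the shift by one in the exponent. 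I expect this logarithmic bookkeeping, and in particular obtaining both directions of the equivalence rather than only one, to be the main obstacle. If it becomes unwieldy, I will fall back on citing Bingham--Doney directly.
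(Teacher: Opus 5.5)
Your core route is the paper's. Its proof only identifies $Y$ with the Galton--Watson limit and quotes two results. The first is Athreya--Ney (Chapter I, Section 10, Theorem 2): $\mathbb E[N\ln^{1+a}N]<\infty$ iff $\mathbb E[Y\ln^a Y]<\infty$ for every $a\ge 0$, which covers the first two bullets. The second is Bingham--Doney (Theorem 0) for integer moments. These are exactly the results you invoke.

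However, one step in your identification preamble is false. When $\mathbb E[Y]=\infty$, nothing forces the constant in~\eqref{eq:main-Laplace} to equal $c_0=\mathbb E[N]$. Let $W$ be the Seneta--Heyde limit from Remark~\ref{rmk:conv}, and let $Z_\alpha$ be independent with $\mathbb E[e^{-uZ_\alpha}]=e^{-u^\alpha}$, $0<\alpha<1$. Then $W^{1/\alpha}Z_\alpha$ has Laplace transform $L_W(u^\alpha)$ and is nonnegative $N$-stable with constant $c_0^{1/\alpha}$. For geometric $N$, the Mittag-Leffler laws with $\alpha<1$ are concrete examples. Index-one regular variation is a property of $1-L_W$, not of every solution, so it cannot pin down the index of an arbitrary nonnegative $N$-stable law. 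Hence ``for example $Q$ itself'' is not an admissible reading, and for such $Q$ your reduction to $Y=W$ breaks down.

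There are two ways to fix this:
\begin{itemize}
\item Define $Y:=W$ when $\mathbb E[N\ln N]=\infty$, which is what Remark~\ref{rmk:conv} and the paper's citation tacitly do.
\item Note that in that case every nondegenerate nonnegative $N$-stable law has infinite mean, so all three equivalences read ``$\infty$ iff $\infty$''. Indeed, a finite mean forces $c=c_0$ by differentiating at $0^+$, and a mean-one solution would force $\mathbb E[N\ln N]<\infty$.
\end{itemize}

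Two details in your optional self-contained sketch need correcting.

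\textbf{Telescoping weights.} With $\eta(u)=L(u)-1+u\ge0$ one has $\eta(c_0u)=c_0\eta(u)+\delta(L(u))$, hence $\eta(u)=\sum_{j\ge1}c_0^{\,j-1}\delta(L(c_0^{-j}u))$. The weights grow rather than decay. This growth, together with $\delta(s)=o(1-s)$ and the nonnegativity of all terms, produces the extra logarithm; nonnegativity also lets Tonelli give both directions at once.

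\textbf{Fa\`a di Bruno balancing.} Balancing at $u=0$ reads $\infty=\infty$ unless $L^{(k)}(0+)$ is already known to be finite. So it only gives $\mathbb E[Y^k]<\infty\Rightarrow\mathbb E[N^k]<\infty$, which is immediate anyway from~\eqref{eq:Bunge}. The converse genuinely needs the $L^k$-boundedness of $N_n/c_0^n$, for instance via the factorial-moment recursion for $\varphi_n=\varphi_{n-1}\circ\varphi$. This is a necessity, not merely a cleaner alternative.
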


As a corollary, if one distribution $P \in G(Q)$ satisfies $\mathbb E[N\ln^{a+1}N] < \infty$ for a fixed $a \ge 0$, or $\mathbb E[N^k] < \infty$ for a fixed $k = 2, 3, \ldots$ where $N \sim P$, then all other distributions in $G(Q)$ satisfy this property, too. 

\subsection{Examples of semigroups} Existing literature shows various examples of such multiplication semigroups of $S(Q) \subseteq [1, \infty)$. If there is no nontrivial $P$ such that $Q$ is $P$-stable, then $S(Q) = \{1\}$. The other exteme, $S(Q) = [1, \infty)$, happens when there is a continuous-time branching process $N = (N(t),\, t \ge 0)$ such that for every $t \ge 0$, $Q$ is $N(t)$-stable. Background on continuous-time branching processes is provided in \cite[Chapter V]{Harris}. On reparameterization, see a subsection below, and in particular~\eqref{eq:param-p}. In a seminal article \cite{Bunge}, we have $S(Q) = \{1, c, c^2, \ldots\}$ in  \cite[Examples 2, 4]{Bunge}. These are based on discrete-time branching processes not embeddable into continuous-time branching processes, although we have reservations about explanations there. Also, $S(Q) = \{1, 2, 3, \ldots\}$ in \cite[Example 3]{Bunge} for $Q = \delta_1$. Finally, the article \cite{Klebanov} provides an example of $S(Q) = \{1, 4, 9, \ldots\}$ for $Q$ having Laplace transform $(\cosh\sqrt{2u})^{-1}$ for $u \ge 0$, where $\cosh$ is the hyperbolic cosine.

\subsection{Parameterization} Here, we use $c = \mathbb E[N] \ge 1$ (for the standard $X$, we have $\alpha = 1$), and the semigroup operation on $c$ is multiplication. This exactly matches the notation in \cite{Bunge} with $e^t$ instead of $c$, and $t \ge 0$. The corresponding semigroup operation here is addition. However, in other literature, a different parameterization is used: $p \in (0, 1)$. Historically, $N$-stable $X$ arose as weak limits of scaled random sums of random variables. Much like classic stable distributions are weak limits of scaled and shifted sums of a deterministic but growing number of independent identically distributed random variables. Therefore, the notation was used is $\nu_p$ instead of $N$, with 
\begin{equation}
\label{eq:param-p}
p\nu_p \stackrel{d}{\to} X \ge 0,\quad p \in \Xi \subseteq (0, 1),\quad p \downarrow 0;
\end{equation}
where $X$ is, in fact, the standard $\nu_p$-stable random variable for any $p$; and $\Xi$ is a subset of $(0, 1)$ with limit point $0$. The parameter $p$ plays the same role as the parameter $p$ from geometric distributions in~\eqref{eq:geometric}. In other cases, these distributions could be defined only for some $p \in (0, 1)$. 

Some previous authors found it more convenient to impose this limit assumption, and not derive it from scratch. Often, another additional condition was imposed: $\mathbb E[\nu_p] = 1/p$. Normalizing by convergence and expectation at the same time. But this might not be conistent, and we avoid this notation here. The reason is subtle: Match the discussion in Section 1, Remark~\ref{rmk:conv}, with the $p$-notation, and recall the two cases, with $\mathbb E[N\ln N] < \infty$ or $\mathbb E[N\ln N] = \infty$. In both cases, we do have $\mathbb E[N_k] = c^k$.

\begin{itemize}
\item For the case $\mathbb E[N\ln N] < \infty$, or equivalently, $\mathbb E[Z] = 1$, we can let $p = c^{-k}$ and $\nu_p = N_k$. Then 
$$
\mathbb E[\nu_p] = \frac1p,\quad p\nu_p \stackrel{d}{\to} Z,\quad p \downarrow 0.
$$
\item If $\mathbb E[N\ln N] = \infty$, or equivalently, $\mathbb E[Z] = \infty$, then either let $p = c^{-k}$; then $\mathbb E[\nu_p] = 1/p$ but not $p\nu_p \to Z$; or let $p = 1/C_k$, then $p\nu_p \to Z$ but  $\mathbb E[\nu_p] \ne 1/p$. We prefer parameterization $p = 1/C_k$. 
\end{itemize}

In the second case, we cannot have both expectation and convergence assumptions for $\nu_p$. The notation of $\nu_p$ is used in the following literature (the list is not exhaustive): \cite[Section 2]{Melamed}, (1) and (3), both expectation and convergence; \cite[Section 1]{Anna}, only convergence; \cite[page 304]{Stable}, they mentioned only expectation but clearly meant convergence as well, as seen from Theorem 2; \cite[Section 2]{Klebanov}, only expectation, but later convergence is introduced; \cite[Section 2]{Anna} (3) and (4), only convergence; \cite[Chapter 4]{GnedenkoBook}, (6.32), only convergence. Sometimes, commutativity is required or derived in special cases. 

The real reason behind the prevalence of this $p$-notation is that, historically, random stability study to a large extend (although not exclusively) was focused on geometric stability (that is, when $N = N_p$ is geometric from~\eqref{eq:geometric}). In this case, $pN_p$ converges to the standard exponential distribution as $p \downarrow 0$, and $\mathbb E[N_p] = 1/p$. Approximations of sums of $N_p$ many independent identically distributed random variables as $p \downarrow 0$ (similar in spirit to the Central Limit Theorem) were extensively studied. A good exposition is in \cite[Chapter 9]{KKRbook}. 

\begin{lemma}
\label{lemma:semigroup-scaling}
Pick a distribution $Q$ on the half-line. Assume $G(Q)$ is nontrivial. Then the random variable $N_c$ with probability generating function~\eqref{eq:Bunge} with $L$ being the Laplace transform of $Y$ satisfies 
\begin{equation}
\label{eq:param-c}
\frac{N_c}{c} \stackrel{d}{\to} Y,\quad c \to \infty,\quad c \in S(Q), 
\end{equation}
if and only if $\mathbb E[N_c\ln N_c] < \infty$. In this case, $\mathbb E[Y] = 1$.
\end{lemma}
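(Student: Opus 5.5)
Write $\varphi_c(s)=L(cL^{\leftarrow}(s))$ for the probability generating function of $N_c$, with $c\in S(Q)$. Here $L$ is the Laplace transform of $Y$, and $Y$ is the (possibly infinite-mean) limit fixed by the semigroup. The plan is to compute the Laplace transform of $N_c/c$ directly from \eqref{eq:Bunge}. For $u>0$,
$$
\mathbb E\left[e^{-uN_c/c}\right]=\varphi_c\bigl(e^{-u/c}\bigr)=L\bigl(c\,L^{\leftarrow}(e^{-u/c})\bigr).
$$
Since $e^{-u/c}\to1$ and $L^{\leftarrow}(1)=0$, everything reduces to the local behaviour of $L^{\leftarrow}$ near $1$, that is, of $L$ near $0$. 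By continuity of $L$ on $[0,\infty)$, the convergence $N_c/c\to Y$ in law holds if and only if $c\,L^{\leftarrow}(e^{-u/c})\to u$ for every $u>0$. Put $v=L^{\leftarrow}(e^{-u/c})$, so that $v\to0$ and $1-L(v)=1-e^{-u/c}\sim u/c$. The condition then becomes $v/(1-L(v))\to1$ as $v\downarrow0$ along the relevant sequence.

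\emph{Step 1: the ``if'' direction.} Assume $\mathbb E[N_c\ln N_c]<\infty$ for one (hence, by Lemma~\ref{lemma:finite-moments}, every) nontrivial $N_c$. Then Lemma~\ref{lemma:finite-moments}, or directly Proposition~\ref{prop:n-ln-n}, gives $\mathbb E[Y]<\infty$. After the normalization built into $S(Q)$ and \eqref{eq:Bunge}, this yields $\mathbb E[Y]=1$. Consequently $1-L(v)\sim v$ as $v\downarrow 0$, so $cL^{\leftarrow}(e^{-u/c})\to u$ and the convergence \eqref{eq:param-c} follows. I need to check that the normalization of $Y$ is the one with mean $1$. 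This is consistent: differentiating \eqref{eq:main-Laplace} at $0$ gives $\mathbb E[N]\,\mathbb E[Y]=c\,\mathbb E[Y]$, and the mean is then fixed to $1$ by the standard choice in Proposition~\ref{prop:n-ln-n}.

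\emph{Step 2: the ``only if'' direction.} Suppose $N_c/c\to Y$ along $c\to\infty$ in $S(Q)$. I would argue by contradiction that $\mathbb E[Y]<\infty$, and then invoke Lemma~\ref{lemma:finite-moments} to obtain $\mathbb E[N\ln N]<\infty$. If instead $\mathbb E[Y]=\infty$, then $(1-L(v))/v\to\infty$ as $v\downarrow0$. Since $1-L(v)\sim u/c$, this forces $v=o(1/c)$, so $cL^{\leftarrow}(e^{-u/c})\to0$. The Laplace transform of $N_c/c$ would then tend to $L(0)=1$, meaning $N_c/c\to0$ in law. This contradicts convergence to the nondegenerate $Y$, because $Y$ has mean $1$ or $\infty$ and in particular $Y\not\equiv0$. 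Hence $\mathbb E[Y]<\infty$, and then $\mathbb E[Y]=1$ by the same differentiation argument as in Step 1.

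The main obstacle is the monotone-inversion bookkeeping, namely turning the asymptotics of $1-L$ at $0$ into asymptotics of $L^{\leftarrow}$ at $1$, uniformly enough to pass along the sequence in $S(Q)$. I would handle this with the elementary fact that $(1-L(v))/v$ is monotone nonincreasing in $v$, which holds by concavity of $1-L$. Its limit as $v\downarrow 0$ is $\mathbb E[Y]\in(0,\infty]$. This monotonicity makes both directions of the inversion straightforward.
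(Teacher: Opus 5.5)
Your argument is correct. Its ``if'' half is essentially the paper's Step~1: both compute $\mathbb E[e^{-uN_c/c}]=L(cL^{\leftarrow}(e^{-u/c}))$ and use $1-L(v)\sim v$ (equivalently $(L^{\leftarrow})'(1-)=-1$) to get $cL^{\leftarrow}(e^{-u/c})\to u$.

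The converse is where you take a genuinely different route. The paper passes to the subsequence $c=b^n$ and reads $N_{b^n}$ as the $n$th generation of a Galton--Watson process with offspring $N_b$. It then invokes the Kesten--Stigum/Seneta--Heyde dichotomy behind Remark~\ref{rmk:conv} to conclude $\mathbb E[Y]<\infty$. You instead rerun the Step~1 computation with $\mathbb E[Y]=\infty$. Because $(1-L(v))/v\uparrow\mathbb E[Y]$ as $v\downarrow0$, you get $cL^{\leftarrow}(e^{-u/c})\to0$, hence $N_c/c\to0$ along all of $S(Q)$, with no branching-process structure at all.

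Pushed one step further, your computation gives the complete picture. With $\mu=\mathbb E[Y]\in(0,\infty]$ one has $cL^{\leftarrow}(e^{-u/c})\to u/\mu$, so $N_c/c\to Y/\mu$ (read as $0$ if $\mu=\infty$), and the limit is $Y$ exactly when $\mu=1$. What your route does not remove is the deep input. The equivalence $\mathbb E[N\ln N]<\infty\Leftrightarrow\mathbb E[Y]<\infty$ still comes from Lemma~\ref{lemma:finite-moments} (Athreya--Ney), whereas the paper draws the same information from the branching limit theorem.

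Two justifications need repair.

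First, nothing in $S(Q)$ or \eqref{eq:Bunge} normalizes $Y$. Replacing $Y$ by $\lambda Y$ leaves $L(cL^{\leftarrow}(s))$ unchanged, and differentiating \eqref{eq:main-Laplace} only gives $\mathbb E[N]=c$. So $\mathbb E[Y]=1$ in the ``if'' direction is the standing convention that $Y$ is the standard variable, normalized to mean $1$ whenever its mean is finite. This is the same convention as the paper's ``Assume $\mathbb E[Y]=1$''. The convention is not cosmetic. If $Q$ is the law of a nonnegative $N$-stable variable of index $\alpha<1$ as in Lemma~\ref{lemma:positive}, then $\mathbb E[N\ln N]<\infty$, yet $Q$ has infinite mean and $N_c/c\to0$. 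In particular, Proposition~\ref{prop:n-ln-n} alone does not give $\mathbb E[Y]<\infty$ for a given $Y$.

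Second, in the converse, $\mathbb E[Y]=1$ does not follow from differentiation. It follows from the $u/\mu$ limit above: $Y/\mu\stackrel{d}{=}Y$ with $0<\mu<\infty$ forces $\mu=1$.
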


\subsection{Equivalent definitions of random stability} 
There is another definition of random stable variables in the literature, one that is based on weak limits of scaled random sums. Stable and strictly stable distributions  are often defined as weak limits of scaled sums of $N$ independent identically distributed random variables as $N \to \infty$. The same can be done for random stable distributions. See \cite[Section 2]{Anna}, and \cite[Theorems 1, 2, 3]{B}.

Instead of $N \to \infty$, we need a semigroup of probability distributions on nonnegative integers; or, equivalently, a semigroup of probability generating functions. We might as well assume this semigroup is commutative. Although we think one does not really need this assumption, the proof is easy if we impose it. Luckily, even when we start from one $N$ such that $X$ is $N$-stable, such semigroup comes naturally: This is the discrete semigroup generated by $N$; or, equivalently, the set of probability generating functions of $N_k$ for each $k$, where $(N_k)$ is the discrete-time branching process with $N$ offsprings. 

The result below is simple but needs to be included separately. We could not find it explicitly written in the literature. 

\begin{lemma} Take a random variable $Y \ge 0$ with $\mathbb E[Y] = 1$. Assume its semigroup $G(Y)$ is not trivial. Let $N_c$ be the random variable with probability generating function as in~\eqref{eq:Bunge}, with $c \in S(Y)$. Then a random variable  
$X$ is $N$-stable for at least one, and therefore for all, $N$ with distribution in $G(Y)$, if and only if there exists a sequence of independent identically distributed random variables $U_1, U_2,\ldots$ independent of all these $N$, and a function $a : S(Y) \to [0, \infty)$ such that 
\begin{equation}
\label{eq:1}
\frac{U_1 + \ldots + U_{N_c}}{a(c)} \stackrel{d}{\to} X,\quad c \to \infty,\quad c \in S(Y).
\end{equation}
If this is true, then for any such sequence $U_1, U_2, \ldots$ the statement~\eqref{eq:1} is equivalent to~\eqref{eq:2}:
\begin{equation}
\label{eq:2}
\frac{U_1 + \ldots + U_{[c]}}{a(c)} \stackrel{d}{\to} Z,\quad c \to \infty,\quad c \in S(Y).
\end{equation}
In this case, we have~\eqref{eq:repr}, with $Z$ strictly $\alpha$-stable, and $Y$ and $Z$ independent. Moreover, the distributions of $X$ and $Z$ are related as in~\eqref{eq:repr}. 
\label{lemma:equiv}
\end{lemma}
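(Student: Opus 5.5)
Both directions go through characteristic functions, the Poincar\'e equation~\eqref{eq:main-ChF} and the representation of Theorem~\ref{thm:repr}; note that $Y$ here plays the role of the standard $N$-stable variable for every $N$ with law in $G(Y)$. Write $\varphi_c = L(cL^{\leftarrow}(\cdot))$ for the generating function of $N_c$. Since $\varphi_c\circ\varphi_{c'}=\varphi_{cc'}$ by Lemma~\ref{lemma:limit-point}, all $N_c$ commute. Theorem~\ref{thm:commute} and Lemma~\ref{lemma:curious} then give the ``one, therefore all'' part: if $X$ is $N$-stable for one $N$ with law in $G(Y)$, it is stable for all of them. They also show that every such $X$ has the form $X\stackrel{d}{=}Y^{1/\alpha}Z$ with characteristic function $L(g(u))$, $g\in\mathcal G$.

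\textbf{Sufficiency} (from the limit~\eqref{eq:1} to $N$-stability). Let $h(u)=\mathbb E[e^{\mathrm{i}uU_1}]$. Then~\eqref{eq:1} reads $\varphi_c(h(u/a(c)))\to f(u)$. Fix $d\in S(Y)$ and compose: $\varphi_d\circ\varphi_c=\varphi_{dc}$, so
$$
\varphi_d\bigl(\varphi_c(h(u/a(c)))\bigr)=\varphi_{dc}\bigl(h(u\,a(dc)/a(c)\,/a(dc))\bigr).
$$
The left side tends to $\varphi_d(f(u))$, by continuity of $\varphi_d$ on the closed disc. The right side is the characteristic function of the $dc$-normalized sum, rescaled by $a(dc)/a(c)$.

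Apply the convergence-of-types theorem along $c\to\infty$ in $S(Y)$: the normalized sums at $dc$ converge to $X$, and after rescaling they converge to a law with characteristic function $\varphi_d\circ f$. Hence $a(dc)/a(c)\to\kappa(d)\in(0,\infty)$ and $\varphi_d(f(u))=f(\kappa(d)u)$, i.e.\ $X$ is $N_d$-stable. Nondegeneracy of $X$ is needed here, and I assume it; the degenerate case $X\equiv 0$ is trivial.

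\textbf{Necessity} (from the representation to the limit). Given $X=Y^{1/\alpha}Z$, take $U_i$ i.i.d.\ copies of $Z$ and $a(c)=c^{1/\alpha}$. Conditionally on $N_c$, the sum $U_1+\dots+U_{N_c}$ has the law of $N_c^{1/\alpha}Z$ by strict stability. Since $\mathbb E[Y]=1$, Lemma~\ref{lemma:semigroup-scaling} gives $N_c/c\to Y$, and therefore $(N_c/c)^{1/\alpha}Z\to Y^{1/\alpha}Z=X$, which is~\eqref{eq:1}. Along the same lines, \eqref{eq:2} holds trivially with $U_1+\dots+U_{[c]}\stackrel{d}{=}[c]^{1/\alpha}Z$.

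\textbf{Equivalence of~\eqref{eq:1} and~\eqref{eq:2}} for an arbitrary sequence, which is the main step. If~\eqref{eq:2} holds, then $Z$ is strictly stable, since $c$ ranges over a multiplicative semigroup tending to infinity; this follows from the classical theory of limits of normed sums. Moreover $a(c)$ is regularly varying with index $1/\alpha$ along $S(Y)$, so $h(u/a(c))^{[c]}\to e^{-g(u)}$ and hence $h(u/a(c))^{c\,t}\to e^{-t g(u)}$ uniformly for $t$ in compacts. With $N_c/c\to Y$ and uniform integrability arguments, this gives
$$
\varphi_c(h(u/a(c)))=\mathbb E\bigl[h(u/a(c))^{c\cdot N_c/c}\bigr]\to \mathbb E\bigl[e^{-Yg(u)}\bigr]=L(g(u)),
$$
which is~\eqref{eq:1}. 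This is the transfer theorem.

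The converse direction is the main obstacle. Given~\eqref{eq:1}, extract subsequences along which $h(u/a(c))^{[c]}$ converges; tightness must be shown, presumably from the tightness of the mixed sums and the fact that $Y$ is not concentrated at $0$. Any limit $e^{-\tilde g}$ then satisfies $L\circ\tilde g=f=L\circ g$. At this point Lemma~\ref{lemma:unique} replaces the unproven Szasz condition of Remark~\ref{rmk:artificial}: it identifies $\tilde g=g$, first for the strictly stable limits, after arguing that the subsequential limits are strictly stable by the semigroup rescaling argument above. Uniqueness of the limit then gives~\eqref{eq:2}, together with the relation~\eqref{eq:repr} between $X$ and $Z$.
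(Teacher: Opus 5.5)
Your sufficiency argument is correct and more self-contained than the paper's Step~1, which only cites Gnedenko--Korolev, Theorem 4.6.5. Composing $\varphi_d\circ\varphi_c=\varphi_{dc}$ and applying convergence of types shows directly that \eqref{eq:1} makes $X$ stable for every $N_d$, $d\in S(Y)$. The transfer direction \eqref{eq:2}$\Rightarrow$\eqref{eq:1} is also fine, and it needs neither regular variation nor stability of the limit: $[c]\,(h(u/a(c))-1)\to-\tilde g(u)$, together with $N_c/c\to Y$ and $\mathbb E[N_c/c]=1$, already gives $\varphi_c(h(u/a(c)))\to L(\tilde g(u))$ for any infinitely divisible limit $e^{-\tilde g}$. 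For necessity, the paper takes $U_j\stackrel{d}{=}X$ with $a(c)$ the stability constant of $N_c$; this makes \eqref{eq:1} an exact identity and avoids \eqref{eq:repr}.

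The gap is the converse \eqref{eq:1}$\Rightarrow$\eqref{eq:2}, where the paper again only cites Theorem 4.6.5. First, you do not prove tightness of $(U_1+\dots+U_{[c]})/a(c)$. This needs a symmetrization and L\'evy-type inequality argument, using that $\mathbb P(N_c\ge[c])$ stays bounded away from $0$, plus control of the centering. Second, the identification step fails as described. Your rescaling argument constrains only a limit taken along all of $S(Y)$, not a limit along an arbitrary subsequence $c_n$, so nothing gives $\tilde g\in\mathcal G$. Without that, Lemma~\ref{lemma:unique} does not apply. Deducing $\tilde g=g$ from $L\circ\tilde g=L\circ g$ for a general infinitely divisible $e^{-\tilde g}$ is exactly the Sz\'asz condition of Remark~\ref{rmk:artificial} that you wanted to avoid.

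Moreover, your claim that \eqref{eq:2} forces $Z$ to be strictly stable ``since $c$ ranges over a multiplicative semigroup'' is false when $S(Y)=\{1,b,b^2,\dots\}$. Such a semigroup only yields semistability, and this is sharp. Take $0<\alpha<2$, $\lambda=b^{1/\alpha}$ and $\psi(u)=2\sum_{k\in\mathbb Z}b^{-k}(1-\cos(\lambda^k u))$. This is the exponent of a symmetric infinitely divisible law with L\'evy measure on $\{\pm\lambda^k\}$, hence not stable. Then $\psi(\lambda u)=b\psi(u)$, so i.i.d.\ $U_j$ with characteristic function $e^{-\psi}$ satisfy \eqref{eq:2} with $a(b^k)=\lambda^k$ and limit $e^{-\psi}$, and \eqref{eq:1} with limit characteristic function $L\circ\psi$.

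The same phenomenon breaks the ``one, therefore all'' step you import from Theorem~\ref{thm:commute}. Take $Y\sim\mathrm{Exp}(1)$, $L(u)=1/(1+u)$, and $\psi$ as above with $b=2$. The law with characteristic function $1/(1+\psi(u))$ is $N_{1/2}$-stable for the geometric law of \eqref{eq:geometric}. It is not $N_{1/3}$-stable, although $N_{1/3}$ commutes with $N_{1/2}$. Indeed, $N_{1/3}$-stability would force $\psi(\kappa u)=3\psi(u)$; together with $\psi(\lambda u)=2\psi(u)$ and $\ln 3/\ln 2\notin\mathbb Q$, this makes $\psi$ a multiple of $|u|^{\alpha}$, which is impossible for a discrete L\'evy measure.

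So the missing ingredient, a genuine reason why the limit is strictly stable, cannot come from the semigroup structure alone. It has to come from $S(Y)$ being richer than a single geometric sequence (e.g.\ $S(Y)=[1,\infty)$, where $[c]$ runs through all integers), and ``for at least one $N$'' has to be replaced by ``for all $N$''. The paper does not supply this either. At exactly this point it relies on the citation and on Theorem~\ref{thm:repr}, whose Step~3 makes the same ``classic theory'' claim for limits along $[a^k]$.
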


We see that $U_j$ belong to the strict domain of attraction: Recall our discussion above about defining strictly stable distributions as weak limits of scaled sums of independent identically distributed random variables. This is different from classic 
$\alpha$-stable domains of attraction, where adding constants is allowed: Recall~\eqref{eq:stable-weak-limits} and the discussion there. Strict domain of stability include scaling sums of independent indentically distributed random variables without  adding constants. See the discussion in \cite[Section 2]{Anna}.

\section*{Acknowledgements} 

We thank the referee for useful comments which led to catching many misprints and greatly improving the manuscript. We thank our departmental colleague Tomasz Kozubowski for raising the question whether there exists a Sibuya-stable random variable, and another question whether there exist non-geometric and non-Sibuya commuting semigroups; and further useful discussion. We thank Thierry Huillet for pointing out recent articles on continuous-time branching processes with explicit distributions. We thanks Svetlozar Rachev for multiple useful comments on an earlier draft. We thank Lev Klebanov for useful discussion, and for pointing our attention to his book \cite{KlebanovBook}.

\section{Proofs}

First, we present proofs of three main results: Theorems~\ref{thm:main},~\ref{thm:repr},
~\ref{thm:commute}. Next, we present proofs of the twelve lemmas: eleven from the main text plus an additional technical lemma. 

\subsection{Proof of Theorem~\ref{thm:main}} 

This is the longest proof, and we present a short overview. We split it in 8 steps. Step 1  is introductory and involves branching processes and random sums of copies of $X$. Step 2 is $\mathbb E[N] \le 1$. Steps 3--7 are for the case $\mathbb E[N] = \infty$. We symmetrize the distribution $X$ and then use tightness arguments to arrive at a contradiction. Finally, Step 8 mentions the classic case $1 < \mathbb E[N] < \infty$. 

{\it Step 1.} As before, let $\varphi$ be the probability generating function of $N$. Consider a discrete-time branching process $(N_k)$ starting from $N_0 = 1$ and each particle having the number of offsprings distributed as $N$. The probability generating function of $N_k$ is the $k$th composition of $\varphi$:
$$
\varphi_k(s) = \mathbb E\left[s^{N_k}\right] = \varphi(\varphi(\ldots\varphi(s)\ldots)).
$$
For background, we refer to the classic monographs \cite[Chapter 1]{Harris} and \cite[Chapter I]{AthreyaBook}. Applying the main equality $k$ times, we get:
\begin{equation}
\label{eq:k-times}
X_1 + \ldots + X_{N_k} \stackrel{d}{=} c^kZ_1.
\end{equation}
This identity is helpful for the proofs of lack of $N$-stable $X$ for $\mathbb E[N] < 1$ and $\mathbb E[N] = \infty$. The first case is simple, the second is much harder. We always assume $(N_k)$ is independent of $X_1, X_2, \ldots$ 

{\it Step 2.} Assume $\mathbb E[N] \le 1$. From the classic theory of branching processes \cite[Chapter 1]{Harris}, Theorem 6.1 or \cite[Chapter I]{AthreyaBook}, Section 5, Theorem 1, the process $(N_k)$ becomes extinct with probability $1$. That is, $N_k = 0$ for some $k$ almost surely. This immediately implies that the left-hand side of~\eqref{eq:k-times} is equal to zero. Therefore, the right-hand side of~\eqref{eq:k-times} is also zero, which implies $X_1 = 0$ almost surely. This contradiction completes the proof of lack of nontrivial $N$-stable $X$ if $\mathbb E[N] \le 1$. 

{\it Step 3.} Now assume $\mathbb E[N] = \infty$. Define $Y_n := X_n - X'_n$, where $X'_1, X'_2, \ldots$ is yet another sequence of copies of $X$, independent of each other, of $X_1, X_2, \ldots$ and of the branching process $(N_k)$. Then 
\begin{equation}
\label{eq:Y-sum}
Y_1 + \ldots + Y_{N_k} = \left(X_1 + \ldots + X_{N_k}\right) - \left(X'_1 + \ldots + X'_{N_k}\right).
\end{equation}
Dividing~\eqref{eq:Y-sum} by $c^k$, we get:
\begin{align}
\label{eq:Sigma}
\Sigma_k &:= \frac{1}{c^k}\left(Y_1 + \ldots + Y_{N_k}\right) = S_k - S'_k,\\
S_k &:= \frac1{c^k}\left(X_1 + \ldots + X_{N_k}\right),\quad S'_k = \frac1{c^k}\left(X'_1 + \ldots + X'_{N_k}\right).
\end{align}
Note that $S_k$ and $S'_k$ are dependent via $N_k$. Thus we {\it cannot} claim that $\Sigma_k \stackrel{d}{=} X - X'$ for an independent copy $X'$ of $X$. However, we can claim the sequence $(\Sigma_k)$ is tight (in other words, relatively compact, bounded in probability): For any $\varepsilon > 0$, there exists a $K > 0$ large enough so that $\mathbb P(|\Sigma_k| \ge K) \le \varepsilon$ for all $k$. This follows from the tightness of sequences $(S_k)$ and $(S'_k)$, which all have the same distribution, the same as $X$. 

{\it Step 4.} Define the characteristic function of $\Sigma_k$:
\begin{equation}
\label{eq:g-k}
g_k(u) = \mathbb E\left[e^{\mathrm{i}u\Sigma_k}\right]
\end{equation}
It is real-valued, since $\Sigma_k \stackrel{d}{=} -\Sigma_k$. It turns out that this simplifies the proof in a critical way. See \cite[Chapter 3]{FTbook}, Theorem 3.1.2. Applying \cite[Chapter 3]{Stroock}, Lemma 3.1.3, we derive from tightness of the sequence $(\Sigma_k)$ that 
\begin{equation}
\label{eq:tight-conv}
\sup\limits_{k \ge 1}|1 - g_k(u)| \to 0,\quad u \to 0.
\end{equation}
Since $g_k$ is real-valued, and by \cite[Chapter 2]{FTbook}, page 36, $g_k(u) \le 1$, we can simply write this statement~\eqref{eq:tight-conv} without the absolute value. This characteristic function $g_k$ from~\eqref{eq:g-k} can be represented as 
\begin{equation}
\label{eq:g-f}
g_k(u) = \varphi_k(g(u/c^k)),\quad g(u) := \mathbb E\left[e^{\mathrm{i}uY_n}\right] = \mathbb E\left[e^{\mathrm{i}u(X_n - X'_n)}\right] = |f(u)|^2
\end{equation}
is the characteristic function of $Y_n$, see \cite[Chapter 3]{FTbook}, Corollary 2 of Theorem 3.3.1. 

{\it Step 5.} Fix an $a > 1$. From scaling theory for discrete-time branching processes with infinite mean, see for example \cite[Theorem 4.4]{Seneta1969}, the event 
$A = \{N_k/a^k \to \infty\}$ has positive probability. For any $s \in (0, 1)$, we can rewrite
$$
A = \left\{s^{N_k/a^k} \to 0,\quad k \to \infty\right\}
$$
and note $0 \le s^{N_k/a^k} \le 1$ almost surely. Applying the Fatou lemma, we get:
$$
\varphi_k\left(s^{a^{-k}}\right) = \mathbb E\left[s^{N_k/a^k}\right] 
$$
satisfies the upper limit
\begin{equation}
\label{eq:varphi-liminf}
\varlimsup\limits_{k \to \infty}\varphi_k\left(s^{a^{-k}}\right) \le \mathbb E\left[\varlimsup\limits_{k \to \infty}s^{N_k/a^k}\right]\le 0 \cdot \mathbb P(A) + 1\cdot \mathbb P(A^c) < 1. 
\end{equation}

{\it Step 6.} Comparing~\eqref{eq:varphi-liminf} with~\eqref{eq:tight-conv}, we get: For any $u, s \in (0, 1)$ and $a > 1$, there exists a $k_0(u, s, a)$ such that for $k \ge k_0(u, s, a)$ we have: 
\begin{equation}
\label{eq:est}
g(u/c^k) \ge s^{a^{-k}}.
\end{equation}
Take the logarithms in~\eqref{eq:est} and apply the elementary inequality $\ln y \le y - 1$ for $y > 0$:
\begin{equation}
\label{eq:est1}
g(u/c^k) - 1 \ge a^{-k}\ln s.
\end{equation}
Multiplying~\eqref{eq:est1} by $a^k$ and letting $k \to \infty$, we get:
\begin{equation}
\label{eq:final-liminf}
\varliminf\limits_{k \to \infty}\left[a^{k}(g(uc^{-k}) - 1)\right] \ge \ln s.
\end{equation}
In the right-hand side of~\eqref{eq:final-liminf}, the number $s \in (0, 1)$ is arbitrary. Therefore, we can take $s$ as close to $1$ as we wish, to make $\ln s$ negative but as close to zero as we wish. Also, recall that by \cite[Chapter 2]{FTbook}, page 36, we have: $g(uc^{-k}) \le 1$. Applying all this to~\eqref{eq:final-liminf}, we get:
\begin{equation}
\label{eq:lim}
\lim\limits_{k \to \infty}a^{k}(g(uc^{-k}) - 1) = 0.
\end{equation}

{\it Step 7.} Rewrite as second-order discrete difference with step $t_k := 0.5c^{-k}u$:
$$
2(g(uc^{-k}) - 1) = g(-uc^{-k}) + g(uc^{-k}) - 2g(0) = \triangle^{t_k}_2g(0)
$$
where we define $\triangle ^t_2g(u) := g(u - 2t) + g(u + 2t) - 2g(u)$. Clearly, $t_k \to 0$, and letting $a = c^2$ (since $a$ is arbitrary), we rewrite~\eqref{eq:lim} as 
$\triangle_{2}^{t_k}g(0)/t_k^2 \to 0$. Apply \cite[Chapter 2]{FTbook}, Theorem 2.3.1 and conclude: $g''(0) = 0$, thus $\mathbb E[Y^2] = 0$, and $Y = 0$ almost surely, thus $g(u) \equiv 1$. Comparing this with~\eqref{eq:g-f}, we get: $|f(u)| \equiv 1$. Apply \cite[Chapter 2]{FTbook}, Theorem 2.1.4, Corollary 1. We get: $f(u) = e^{\mathrm{i}uX_0}$ and $X = x_0$ almost surely for some $x_0 \in \mathbb R$. But $X$ is $N$-stable. Plugging $X = x_0$ into~\eqref{eq:main-RV}, we get: $x_0N = cx$. Thus either $N = c$, which contradicts $\mathbb E[N] = \infty$; or $x_0 = 0$, which implies $X = 0$ almost surely, but we exclude this trivial case. This completes the proof that there is no $N$-stable $X$ in case $\mathbb E[N] = \infty$. 

{\it Step 8.} The classic case $1 < \mathbb E[N] < \infty$ is well-known and discussed in the Introduction, using scaling limits of dsicrete-time supercritical branching processes. See \cite[Chapter I]{AthreyaBook}, Section 10, Theorems 2 and 3.

\subsection{Proof of Theorem~\ref{thm:repr}} 

{\it Step 1.} We show that any random variable $X$ with characteristic function $f = L \circ g$ and $g \in \mathcal G$ is $N$-stable. Apply~\eqref{eq:main} and~\eqref{eq:main-ChF} with $c = \mathbb E[N]$:
$$
\varphi(f(u)) = \varphi(L(g(u))) = L(cg(u)) = L(g(c^{1/\alpha}u)) = f(c^{1/\alpha}u).
$$
Incidentally, this proves that characteristic functions of the left- and right-hand sides of~\eqref{eq:main-RV} coincide. Thus we have equality in law. 

{\it Step 2.} We show that any random variable $X$ with characteristic function 
$$
f(u) = \mathbb E\left[e^{\mathrm{i}uX}\right] = L(g(u))
$$
for some $g \in \mathcal G$ of index $\alpha$ can be represented as $X = Y^{1/\alpha}Z$, where $Y$ is the standard $N$-stable random variable, and $\mathbb E\left[e^{\mathrm{i}uZ}\right] = e^{-g(u)}$. We simply apply \cite[Proposition 3.1]{Anna} and use their remarks on page 309, (10) -- (12), Remark 1, about the difference between stable and strictly stable. One can also consult \cite[Chapter 1]{STbook}, Section 2 for the latter question. 

{\it Step 3.} Take a characteristic function $f$ of an $N$-stable random variable $X$. We need to prove that this solution to the main equation~\eqref{eq:main-ChF} can be represented as $f = L\circ g$ for some $g \in \mathcal G$. We apply results \cite[Chapter 4]{GnedenkoBook}, Section 4.6. Recall that 
$(N_0 = 1, N_1 = N, N_2, N_3,\ldots)$ is a branching process with $N$ offsprings of each person. Denote $\mathbb E[N] = a$, then $\mathbb E[N_k] = a^k$. By Remark~\ref{rmk:conv}, since $\mathbb E[N\ln N] < \infty$, we have: $a^{-k}N_k \stackrel{d}{\to} X$ with $\mathbb E[X] = 1$ and $X \ge 0$ a.s., with Laplace transform $L$. Also, recall the discussion about parameterization in or below~\eqref{eq:param-p}. Match the notation with \cite[Chapter 4]{GnedenkoBook} and with our subsection on parameterization: 
$$
\Theta = \{a^{-1}, a^{-2}, \ldots\},
$$
with $\nu_{a^{-k}}$ in the notation of Kozubowski which corresponds to $N_{a^{-k}}$ in the notation of \cite[Chapter 4]{GnedenkoBook} becomes $N_k$, and $\nu$ in the notation of Kozubowski which corresponds to $N$ in the notation of \cite[Chapter 4]{GnedenkoBook} becomes $X$. As in the proof of Theorem~\ref{thm:main} (applying the equality in law multiple times), we conclude that if $X$ is $N$-stable, then $X_1 + \ldots + X_{N_k} \stackrel{d}{=} c^kX_1$. Rewrite this using our new notation:
$$
X_{\theta, 1} + \ldots + X_{\theta, N_k} \stackrel{d}{=} X,\quad \theta = a^{-k},\quad X_{\theta, j} := \frac{X_j}{c^k}.
$$
In the notation of \cite[Chapter 4]{GnedenkoBook}, we have (6.33) with $F$ being the CDF of $X$. Actually, our statement is even stronger: We have equality in law, not just convergence in law. Also, we have (6.32) in the same notation. This was discussed in the subsection on parameterization. Therefore, by \cite[Chapter 4]{GnedenkoBook}, Theorem 4.6.3, the formula (6.34) holds with an infinitely divisible $Z$ with the CDF $G$ and with $m(\theta) = [1/\theta] = [a^k]$. Rewrite this conclusion in our original notation:
$$
\frac{X_1 + \ldots + X_{[a^k]}}{c^k} \stackrel{d}{\to} Z.
$$
By the classic theory of stable distributions, this implies $Z$ is strictly stable. And thus $\mathbb E\left[e^{\mathrm{i}uZ}\right] = e^{-g(u)}$ for some $g \in \mathcal G$. By \cite[Chapter 4]{GnedenkoBook}, Theorem 4.6.5, we can write (6.30), which completes the proof of the representation $f = L\circ g$. With this, we proved that a random variable $X$ is $N$-stable if and only if its characteristic function $f$ can be represented as $L\circ g$ with $g \in \mathcal G$, and thus completed the proof of Theorem 2. 

\subsection{Proof of Theorem~\ref{thm:commute}} 

{\it Step 1.} From Theorem~\ref{thm:repr}, the characteristic function $f$ of $X$ can be represented as $f(u) = L(g(u))$, where $L$ is the Laplace transform of the standard $N$-stable $Y$, and $g \in \mathcal G$. We also have~\eqref{eq:main-ChF} with $c = \mathbb E[N]$. Finally, $\mathbb E[Y] = 1$. 

{\it Step 2.} Assume $M$ and $N$ commute. Then their probability generating functions $\varphi$ and $\psi$ satisfy~\eqref{eq:commute}, and therefore
\begin{equation}
\label{eq:all}
\varphi(\psi(L(u))) = \psi(\varphi(L(u))) = \psi(L(cu)).
\end{equation}
The function $\psi\circ L$ is the Laplace transform of the random variable $S = Y_1 + \ldots + Y_M$, where $Y_1, Y_2,\ldots$ are copies of $Y$ independent of each other and of $M$. From~\eqref{eq:all}, we see that $S$ is $N$-stable, and has finite mean $\mathbb E[S] = \mathbb E[Y]\cdot \mathbb E[M] = \mathbb E[M] = b$. By the uniqueness of the standard $N$-stable random variable, $S/b \stackrel{d}{=} Y$. If we write this equality in law in terms of Laplace transforms, we have: $\psi(L(u)) = L(bu)$. Therefore, $Y$ is $M$-stable. Finally, letting $\alpha$ be the index of $g$, we get: 
$$
\psi(f(u)) = \psi(L(g(u))) = L(bg(u)) = L(g(b^{1/\alpha}u)) = f(b^{1/\alpha}u).
$$
This proves $X$ is $M$-stable as well. 

{\it Step 3.} Conversely, if $X$ is both $M$-stable and $N$-stable, then $\varphi(f(u)) = f(au)$ and $\psi(f(u)) = f(bu)$. Applying both these identities, we get: 
$$
(\psi\circ\varphi)(f(u)) = f(abu) = (\varphi\circ\psi)(f(u)).
$$
If $X$ is a constant, then $M$ and $N$ are also constants, and obviously they commute. If $X$ is not a constant, then $f$ takes at least one value $z$ inside the unit disc $\mathbb D = \{z \in \mathbb C\mid |z| < 1\}$, \cite[Chapter 2]{FTbook}, Theorem 2.1.4, Corollary 2. But $f$ is continuous, therefore this value $z$ is the limit point of the image of $f$. Both $\varphi\circ\psi$ and $\psi\circ\varphi$ are probability generating functions of some distributions. Therefore, they are analytic on $\mathbb D$. Applying the classic result from compelx analysis, we get~\eqref{eq:commuting-equation}. 

{\it Step 4.} If $X$ is $N$-stable and $M$-stable, then we have: $\mathbb E[M] \in (1, \infty)$ and $\mathbb E[M\ln M] < \infty$; also $\mathbb E[N] \in (1, \infty)$ and $\mathbb E[N\ln N] < \infty$.

{\it Step 5.} Finally, the last claim: For distinct $M$ and $N$, the representation $f = L\circ g$ has the same $L$ and the same $g$. This follows from the uniqueness Lemma~\ref{lemma:unique}. In particular, the index $\alpha$ for both $M$ and $N$ is the same. 

\subsection{Proof of Lemma~\ref{lemma:unique}} {\it Step 1.} It is clear from the representation of $g_j \in \mathcal G$ in~\eqref{eq:G}:
\begin{equation}
\label{eq:G-j}
g_j(u) = |u|^{\alpha_j}(\beta_j + \gamma_j\mathrm{i}\,\mathrm{sgn}(u))
\end{equation}
that  $\mathrm{Re}g_j(u) > 0$ for $u \ne 0$; and $g_j(u) \to 0$ as $u \to 0$. And by Lemma~\ref{lemma:tech} 
\begin{equation}
\label{eq:derivative}
\frac{1 - L_j(g_j(u))}{g_j(u)} \to 1,\quad j = 1, 2,\, u \to 0.
\end{equation}
Indeed, $L_j$ is the Laplace transform of a probability measure on $[0, \infty)$ with mean $1$. Next, 
\begin{equation}
\label{eq:equality}
L_1(g_1(u)) = L_2(g_2(u)).
\end{equation}
Comparing~\eqref{eq:derivative} with~\eqref{eq:equality}, we get: $g_1(u)/g_2(u) \to 1$ as $u \to 0$.  

{\it Step 2.} From~\eqref{eq:G-j}, rewrite for $u > 0$:
\begin{equation}
\label{eq:ratio}
\frac{g_1(u)}{g_2(u)} = u^{\alpha_1 - \alpha_2}\cdot\frac{\beta_1 + \gamma_1\mathrm{i}}{\beta_2 + \gamma_2\mathrm{i}}.
\end{equation}
Since $\beta_1, \beta_2 > 0$, then 
$$
D := \left|\frac{\beta_1 + \gamma_1\mathrm{i}}{\beta_2 + \gamma_2\mathrm{i}}\right| > 0.
$$
Taking the limit as $u \to \infty$, yields $\alpha_1 = \alpha_2$, which consequently implies $\beta_1 = \beta_2$ and $\gamma_1 = \gamma_2$. Thus $g_1 \equiv g_2$. 

{\it Step 3.} To complete the final step and show $L_1 \equiv L_2$, recall $L_1(g_1(u)) \equiv L_2(g_2(u))$. The functions $L_1$ and $L_2$ are analytic on the half-space $\mathbb H := \{z \in \mathbb C\mid \mathrm{Re}(z) > 0\}$, by Lemma~\ref{lemma:tech}. And the set $\{g(u)\mid u > 0\}$ has limit points in $\mathbb H$. By the classic uniqueness result from complex analysis, $L_1 \equiv L_2$. This complets the proof of Lemma~\ref{lemma:unique}. 

\subsection{Proof of Lemma~\ref{lemma:symmetry}} This is another immediate consequence of Theorem~\ref{thm:repr}: 
$$
X \stackrel{d}{=} -X\, \Rightarrow\, Z \stackrel{d}{=} -Z.
$$
The function $g$ from~\eqref{eq:G} is even: $g(u) = g(-u)$. This happens if and only if $\gamma = 0$, so $g(u) = \sigma |u|^{\alpha}$. 

\subsection{Proof of Lemma~\ref{lemma:Gaussian}} From Theorem~\ref{thm:repr}, we get: $X = \sqrt{Y}Z$, where $Z$ is strictly $2$-stable and therefore Gaussian. Thus 
$\mathbb E[X^2] = \mathbb E[Y]\cdot\mathbb E[Z^2] < \infty$, since 
$\mathbb E[Y] = 1$, which follows from $\mathbb E[N\ln N] < \infty$. 

\subsection{Proof of Lemma~\ref{lemma:positive}} Since $X \ge 0$, by assumption, then the strictly stable random variable $Z$ with index $\alpha$ must be nonnegative as well. But this could be true only if $\alpha = 1$ and $Z$ is a positive constant; or $\alpha \in (0, 1)$, and $Z$ has Laplace transform~\eqref{eq:Laplace-alpha}. This can be found in \cite[Chapter V]{SteutelBook}, Theorem 3.5, or in the classic reference \cite{STbook}. Then the Laplace transform of $X$ is 
\begin{align*}
\mathbb E[e^{-uX}] &= \mathbb E\bigl[\exp\bigl(-uY^{1/\alpha}Z\bigr)\bigr] = \mathbb E\,\mathbb E\left[\exp\bigl(-uY^{1/\alpha}Z\bigr)\mid Y\right] \\ & = \mathbb E\left[\exp\bigl(-\sigma u^{\alpha}Y\bigr)\right] = L(\sigma u^{\alpha}).
\end{align*}

\subsection{Proof of Lemma~\ref{lemma:inf-div}} Using the notation from the beginning of this article, we see that the PGF of $M$ is $\varphi^{1/k}$. Therefore, the PGF of $kM$ is $\psi(s) = \varphi^{1/k}(s^k)$. The Laplace transform of $Y$ is $L^{1/k}$. It is straightforward to check that~\eqref{eq:main-ChF} holds if and only if $\psi(L^{1/k}(u)) = L^{1/k}(cu)$ for $u \ge 0$. This completes the proof. 

\subsection{Proof of Lemma~\ref{lemma:curious}} This follows from two consecutive applications of Theorem~\ref{thm:commute}. Indeed, if $X$ is $N$-stable and $M$-stable, then $M$ and $N$ commute. But $Y$ is $N$-stable, and therefore $Y$ is $M$-stable. 

\subsection{Proof of Lemma~\ref{lemma:Gauss}} We express $L(u) = (1 + u)^{-\alpha}$ for the Gamma random variable $X$ with shape $\alpha$ and scale $1$. We find the inverse function $L^{\leftarrow}(s) = s^{-1/\alpha} - 1$. For $c > 1$, we have from~\eqref{eq:Bunge}:
$$
\varphi(u) = u\left(u^{1/\alpha}(1-c) + c\right)^{-\alpha}.
$$
It has Taylor decomposition which contains $u^{1 + 1/\alpha}$. This exponent is an integer if and only if $\alpha = 1/n$ for $n = 1, 2, 3, \ldots$ 

\subsection{Proof of Lemma~\ref{lemma:limit-point}} {\it Step 1.} If $1$ is the limit point of $S(Q)$, then for every interval $(a, b) \subseteq (1, \infty)$, however small, we have a $c \in S(P) \cap (1, b/a)$. Then for some $n$, we have $c^n \in (a, b)$. This $c^n \in S(Q)$, which proves $S(Q)$ is dense in $[1, \infty)$: It is intersecting with any interval. Since $S(Q)$ is topologically closed, $S(Q) = [1, \infty)$. 

{\it Step 2.} All other claims about $S(Q)$ and $G(Q)$ directly follow from \cite[Proposition 1]{Bunge}. They use the notation $e^t = c$ where $t \ge 0$ and $c \ge 1$. 

\subsection{Proof of Lemma~\ref{lemma:finite-moments}} This was, in fact, already proven in the literature, since the tails of standard $N$-stable $Y$ are related to tails of $N$: It is shown in \cite[Chapter I]{AthreyaBook}, Section 10, Theorem 2, that for any $a \ge 0$, 
$$
\mathbb E[N\ln^{1 + a}(N)] < \infty \Leftrightarrow \mathbb E[Y\ln^a Y] < \infty.
$$
Also, it is shown in \cite[Theorem 0]{Bingham1974} that for $k = 2, 3, \ldots$, we have: 
$$
\mathbb E[N^k] < \infty \Leftrightarrow \mathbb E[Y^k] < \infty.
$$

\subsection{Proof of Lemma~\ref{lemma:semigroup-scaling}} {\it Step 1.} Assume 
$\mathbb E[Y] = 1$. The Laplace transform of $N_c/c$:  
\begin{equation}
\label{eq:laplace-semigroup}
\mathbb E\left[e^{-N_cu/c}\right] = L\left(cL^{\leftarrow}\left(e^{-u/c}\right)\right).
\end{equation}
If $Y$ has mean $1$, then $L'(0+) = -1$. Derivative of the inverse function: $(L^{\leftarrow})'(1-) = -1$. Also, an elementary calculus result shows:
$$
e^{-u/c} - 1 \sim \frac{-u}{c},\quad c \to \infty.
$$
Combining these asymptotics, we get:
\begin{equation}
\label{eq:laplace-conv}
cL^{\leftarrow}\left(e^{-u/c}\right) \to (L^{\leftarrow})'(1-) \cdot c\cdot\left(-\frac{u}{c}\right) = u. 
\end{equation}
Applying~\eqref{eq:laplace-conv} to~\eqref{eq:laplace-semigroup}, we get that the left-hand side of~\eqref{eq:laplace-semigroup} converges to $L(u)$, which is the Laplace transform of $Y$. This completes the if part. 

{\it Step 2.} Conversely, if indeed there is such convergence, it holds for $c = b^n$ as well, where $b \in S(Q)$. But this corresponds to scaling of a discrete-time branching process. Using the aforementioned discussion about parameterization, we see that such  scaling works only when $\mathbb E[Y] < \infty$, which is equivalent to $\mathbb E[Y] = 1$. 

\subsection{Proof of Lemma~\ref{lemma:equiv}} {\it Step 1.} Let us derive ~\eqref{eq:main-RV} from~\eqref{eq:1}. Direct application of \cite[Chapter 4]{GnedenkoBook}, Theorem 4.6.5, with the following notation mathc: $\theta = 1/c$ and $m(\theta) = [c]$, $X_{\theta, j} = U_j/a(c)$, $F$ is the CDF of $X$, and $G$ is the CDF of $Z$, and $N = Y$ in (6.32), and (6.30) from \cite[Theorem 4.6.3]{GnedenkoBook} can be rewritten as~\eqref{eq:repr}. See our notation discussion above. 

{\it Step 2.} Conversely, if $X$ is $N$-stable, then we can simply take $U = X$. Then we have equality in law in~\eqref{eq:1} instead of weak convergence. 

{\it Step 3.} Equivalence of~\eqref{eq:1} and~\eqref{eq:2} also follows from \cite[Chapter 4]{GnedenkoBook}, Theorem 4.6.5.

\subsection{Analytical Laplace transform} Take a random variable $X \ge 0$. Consider its Laplace transform $L(u) = \mathbb E[e^{-uX}]$.

\begin{lemma} Assume $\mathbb E[X] < \infty$. We can define $L$ as a complex-valued function on the half-plane $\mathbb H = \{z \in \mathbb C\mid \mathrm{Re}(z) > 0\}$. This function is analytic on $\mathbb H$, and we have: 
\begin{equation}
\label{eq:deriv}
\frac{1 - L(z)}{z} \to \mathbb E[X],\quad z \to 0,\quad \mathrm{Re}(z) > 0.
\end{equation}
\label{lemma:tech}
\end{lemma}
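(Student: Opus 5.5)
The plan is to define $L(z) = \mathbb E[e^{-zX}]$ for $\mathrm{Re}(z) > 0$ directly, and to treat analyticity and the limit \eqref{eq:deriv} separately.

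\textbf{Definition.} Since $X \ge 0$, for $\mathrm{Re}(z) \ge 0$ we have
$$
|e^{-zX}| = e^{-\mathrm{Re}(z)X} \le 1.
$$
So the expectation $L(z) = \mathbb E[e^{-zX}]$ is well defined on the closed half-plane, with $|L(z)| \le 1$, and it agrees with the usual real Laplace transform on $(0,\infty)$. This step needs no moment assumption.

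\textbf{Analyticity.} I would use Morera's theorem together with Fubini.
\begin{itemize}
\item $L$ is continuous on $\mathbb H$ by dominated convergence, with dominating function $1$.
\item For every closed triangle $T \subset \mathbb H$,
$$
\oint_{\partial T} L(z)\,\mathrm{d}z = \mathbb E\left[\oint_{\partial T} e^{-zX}\,\mathrm{d}z\right] = 0 .
$$
The interchange is justified because the integrand is bounded by $1$ on a set of finite measure. The inner integral vanishes because $z \mapsto e^{-zX}$ is entire.
\end{itemize}
Alternatively, one can differentiate under the expectation: on $\{\mathrm{Re}(z) \ge \delta\}$, the bound $X e^{-\delta X} \le 1/(e\delta)$ gives a uniform dominating function. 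Either route works, and again $\mathbb E[X] < \infty$ is not needed here.

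\textbf{The limit \eqref{eq:deriv}.} Write
$$
\frac{1 - L(z)}{z} = \mathbb E\left[X \cdot h(zX)\right], \qquad h(w) := \frac{1 - e^{-w}}{w}, \quad h(0) := 1.
$$
As $z \to 0$ we have $h(zX) \to 1$ pointwise, so by dominated convergence it is enough to show that $|h(w)|$ is bounded on the closed half-plane $\mathrm{Re}(w) \ge 0$. The dominating function is then $C X$, which is integrable exactly because $\mathbb E[X] < \infty$; this is the only place the moment assumption enters.

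The bound on $h$ is the main point to check. Use the integral representation
$$
1 - e^{-w} = \int_0^1 w e^{-tw}\,\mathrm{d}t,
$$
so that $h(w) = \int_0^1 e^{-tw}\,\mathrm{d}t$. Since $|e^{-tw}| = e^{-t\,\mathrm{Re}(w)} \le 1$ for $\mathrm{Re}(w) \ge 0$, we get $|h(w)| \le 1$ on the whole closed half-plane. This bound holds uniformly, including near the imaginary axis, which is the case that matters here: in Lemma~\ref{lemma:unique}, $z = g(u)$ can approach $0$ tangentially to that axis. Dominated convergence then gives $\mathbb E[X h(zX)] \to \mathbb E[X]$.
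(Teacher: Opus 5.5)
Your proof is correct and is essentially the paper's argument. Both rest on Fubini/dominated convergence with the bound $|e^{-wX}|\le 1$ for $\mathrm{Re}(w)\ge 0$; you use Morera (or differentiation under $\mathbb E$) where the paper uses its segment-wise integral identity for $L'$, and your $h(w)=\int_0^1 e^{-tw}\,\mathrm{d}t$ with dominating function $X$ is the paper's terse ``similarly, with $z=0$'' step written out in full. One side remark is inaccurate: for $g\in\mathcal G$ with $\beta>0$, $g(u)$ tends to $0$ along a fixed ray strictly inside $\mathbb H$, so non-tangentially. This is harmless, since your bound on the closed half-plane covers any approach.
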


\begin{proof} For $z = u + \mathrm{i}v \in \mathbb H$, we write $|e^{-zX}| = e^{-uX}$. Therefore, $\mathbb E|e^{-zX}| = \mathbb E[e^{-uX}] < \infty$, and $\mathbb E[e^{-zX}]$ is well defined. It is complex analytic on $\mathbb H$. To this end, we need to prove in the complex analytic sense: $L'(z) = \mathbb E[-Xe^{-zX}]$. But this, in turn, can be proved as follows. Take two points $z, z' \in \mathbb H$ and draw a segment between them. It is parametrized as $w_t = zt + z'(1-t)$. We need to show 
\begin{equation}
\label{eq:FTC}
L(w_t) - L(z) = \int_{[z, w_t]}\mathbb E\left[-Xe^{-uX}\right]\,\mathrm{d}u.
\end{equation}
This integral over the segment is understood in the complex analytic sense. This can be written using real-valued integration as
$$
\int_0^t\mathbb E\left[-Xe^{-(zs + (1-s)z')X}\right](z' - z)\,\mathrm{d}s.
$$
Remove expectations for a moment. From complex analysis, we get:
$$
e^{-(zt + z'(1-t))X} - e^{-ztX} = \int_0^t\left(-Xe^{-w_sX}\right)(z' - z)\,\mathrm{d}s.
$$
We need only to show that we need an interchange of integration and expectation. This requires us to use the Fubini theorem. Usually, this theorem is stated for real-vaued functions, but it works equally well for complex-valued functions. We need:
\begin{equation}
\label{eq:Fubini}
\mathbb E\int_0^t\left|-Xe^{-w_sX}\right|(z'-z)\,\mathrm{d}s < \infty.
\end{equation}
Assuming $\mathrm{Re}(z) \le \mathrm{Re}(z')$ without loss of generality, 
$\mathrm{Re}(z) \le \mathrm{Re}(w_s)$ for all $s \in [0, 1]$. Hence
$$
\mathbb E|-Xe^{-w_sX}| = \mathbb E|Xe^{-\mathrm{Re}(z)X}| < \infty.
$$
This proves~\eqref{eq:Fubini}, and with it proves~\eqref{eq:FTC}. Thus $L$ is analytic on $\mathbb H$. The property~\eqref{eq:deriv} can be shown similarly, with $z = 0$, since $\mathbb E[Xe^{-zX}] = \mathbb E[X]$ for $z = 0$. 
\end{proof}

\end{document}